\definecolor{refkey}{rgb}{0,0,1}
\definecolor{labelkey}{rgb}{1,0,0}
\numberwithin{equation}{section}
\newtheorem{theorem}{Theorem}[section]
\newtheorem{proposition}[theorem]{Proposition}
\newtheorem{lemma}[theorem]{Lemma}
\newtheorem{corollary}[theorem]{Corollary}
\newtheorem{Definition}[theorem]{Definition}
\newenvironment{definition}{\begin{Definition}\rm}{\end{Definition}}
\newtheorem{Remark}[theorem]{Remark}
\newenvironment{remark}{\begin{Remark}\rm}{\end{Remark}}
\newtheorem{Example}[theorem]{Example}
\newenvironment{example}{\begin{Example}\rm}{\end{Example}}
\newtheorem{RHproblem}[theorem]{RH problem}
\newcommand{\C}{\mathbb{C}}
\newcommand{\R}{\mathbb{R}}
\newcommand{\PP}{\mathbb P}
\renewcommand{\hat}{\widehat}
\renewcommand{\tilde}{\widetilde}
\def\det{\mathop{\mathrm{det}}\nolimits}
\begin{document}
\title{A weighted extremal function and equilibrium measure}
\author{Len Bos, Norman Levenberg, Sione Ma`u and Federico Piazzon}
\maketitle 

\begin{abstract}
Let $K=\R^n\subset \C^n$ and 
$Q(x):=\frac{1}{2}\log (1+x^2)$ where $x=(x_1,...,x_n)$ and $x^2 = x_1^2+\cdots +x_n^2$. 
Utilizing extremal functions for convex bodies in $\R^n\subset \C^n$ and Sadullaev's characterization of algebraicity for complex analytic subvarieties of $\C^n$ we prove the following explicit formula for the weighted extremal function $V_{K,Q}$: 
$$V_{K,Q}(z)=\frac{1}{2}\log \bigl( [1+|z|^2] + \{ [1+|z|^2]^2-|1+z^2|^2\}^{1/2})$$
where $z=(z_1,...,z_n)$ and $z^2 = z_1^2+\cdots +z_n^2$. As a corollary, we find that the Alexander capacity $T_{\omega}(\R \PP^n)$ of $\R \PP^n$ is $1/\sqrt 2$. We also compute the Monge-Amp\`ere measure of $V_{K,Q}$: 
$$(dd^cV_{K,Q})^n = n!\frac{1}{(1+x^2)^{\frac{n+1}{2}}}dx.$$
\end{abstract}

\section{Introduction}
For $K\subset \C^n$ compact, define the usual Siciak-Zaharjuta {\it extremal function} 
\begin{equation}\label{vk}
  V_K(z)
  := \max \left\{ 0 ,
    \sup _p \left\{ \frac{1}{deg(p)} \log|p(z)|: p \ \hbox{poly.}, \ ||p||_K:=\max_{z\in K} |p(z)| \leq 1\right\}
    \right\}, \end{equation}
where the supremum is taken over (non-constant) holomorphic polynomials $p$, and let $V_K^*(z):= \limsup_{\zeta \to z} V_K(\zeta)$ be its uppersemicontinuous (usc) regularization. If $K\subset \C^n$ is closed, a nonnegative uppersemicontinuous function $w:K\to [0, \infty)$ with $\{z\in K: w(z)=0\}$ pluripolar is called a weight function on $K$ and $Q(z):=-\log w(z)$ is the {\it potential} of $w$. The associated {\it weighted extremal function} is 
$$V_{K,Q}(z):=\sup \{\frac{1}{deg(p)}\log |p(z)|: p \ \hbox{poly.}, \ ||pe^{-deg(p)Q}||_K\leq 1\}.$$
Note $V_K=V_{K,0}$. For unbounded $K$, the potential $Q$ is required to grow at least like $\log |z|$. If, e.g, 
$$\liminf_{z\in K, \ |z|\to +\infty}\bigl(Q(z)-\log |z|\bigr) > -\infty $$
(we call $Q$ {\it weakly admissible}), then the Monge-Amp\`ere measure $(dd^cV_{K,Q}^*)^n$ may or may not 
have compact support. A priori these extremal functions may be defined in terms of upper envelopes of {\it Lelong class} functions: we write $L(\C^n)$ for the set of all plurisubharmonic (psh) functions $u$ on $\C^n$ with the property that $u(z) - \log |z| = 0(1), \ |z| \to \infty$ and
$$ L^+(\C^n):=\{u\in L(\C^n): u(z)\geq \log^+|z| + C\}$$
where $C$ is a constant depending on $u$. For $K$ compact, either $V_K^*\in L^+(\C^n)$ or $V_K^*\equiv \infty$, this latter case occurring when $K$ is pluripolar; i.e., there exists $u\not \equiv -\infty$ psh on a neighborhood of $K$ with $K\subset \{u=-\infty\}$. In the setting of weakly admissible $Q$ it is a result of \cite{bs} that, provided the function 
$$\sup \{u(z): u\in L(\C^n), \ u\leq Q \ \hbox{on} \ K\}$$
is continuous, it coincides with $V_{K,Q}(z)$. 

If we let $X=\PP^n$ with the usual K\"ahler form $\omega$ normalized so that $\int_{\PP^n} \omega^n =1$, we can define the class of {\it $\omega-$psh functions} (cf., \cite{GZ})
$$PSH(X,\omega) :=\{\phi \in L^1(X): \phi \ \hbox{usc}, \ dd^c\phi +\omega \geq 0\}.$$
Let ${\bf z}:=[z_0:z_1:\cdots :z_n]$ be homogeneous coordinates on $X=\PP^n$. Identifying $\C^n$ with the affine subset of $\PP^n$ given by $\{[1:z_1:\cdots:z_n]\}$, we can identify the $\omega-$psh functions with the Lelong class $L(\C^n)$, i.e., 
$$PSH(X,\omega) \approx L(\C^n),$$
and the bounded (from below) $\omega-$psh functions coincide with the subclass $L^+(\C^n)$: if $\phi \in PSH(X,\omega)$, then
$$u(z)=u(z_1,...,z_n):= \phi ([1:z_1:\cdots:z_n])+\frac{1}{2}\log (1+|z|^2)\in  L(\C^n);$$
if $u\in  L(\C^n)$, define $\phi \in PSH(X,\omega)$ via
$$\phi ([1:z_1:\cdots:z_n])=u(z)-\frac{1}{2}\log (1+|z|^2) \ \hbox{and}$$ 
$$\phi ([0:z_1:\cdots:z_n])=\limsup_{|t|\to \infty, \ t\in \C}[u(tz)-\frac{1}{2}\log (1+|tz|^2)].$$ 
Abusing notation, we write $u= \phi +u_0$ where $u_0(z):=\frac{1}{2}\log (1+|z|^2)$. Given a closed subset $K\subset \PP^n$ and a function $q$ on $K$, we can define a {\it weighted $\omega-$psh extremal function}
$$v_{K,q}({\bf z}):=\sup \{ \phi({\bf z}): \phi \in PSH(X,\omega), \ \phi \leq q \ \hbox{on} \ K\}.$$
Thus if $K\subset \C^n \subset \PP^n$, for $[1:z_1:\cdots:z_n]=[1:z]\in \C^n$ we have
\begin{equation}\label{wtdrel} v_{K,q}([1:z])=\sup \{u(z): u\in L(\C^n), \ u\leq u_0 +q \ \hbox{on} \ K\} -u_0(z)=V_{K,u_0+q}(z)-u_0(z).\end{equation}
If $q=0$, the {\it Alexander capacity} $T_{\omega}(K)$ of $K\subset \PP^n$ was defined in \cite{GZ} as
$$T_{\omega}(K):=\exp {[-\sup_{\PP^n} v_{K,0}]}.$$
This notion has applications in complex dynamics; cf., \cite{DS}.

These extremal psh and $\omega-$psh functions $V_K, V_{K,Q}$ and $v_{K,0}, v_{K,q}$, as well as the homogeneous extremal psh function $H_E$ of $E\subset \C^n$ (whose definition we recall in the next section), are very difficult to compute explicitly. Even when an explicit formula exists, computation of the associated Monge-Amp\`ere measure is problematic. Our main goal in this paper is to utilize a novel approach to explicitly compute $V_{K,Q}$ and $(dd^cV_{K,Q})^n$ for the closed set $K=\R^n\subset \C^n$ and the weight $w(z)=|f(z)|=|\frac{1}{(1+z^2)^{1/2}}|$ where $z^2 =z_1^2 +\cdots + z_n^2$ (see (\ref{magic}) or Theorem \ref{magic1}, and (\ref{monge})). Note the potential $Q(z)$ in this case is the standard K\"ahler potential $u_0(z)$ restricted to $\R^n$. As an application we can calculate the Alexander capacity $T_{\omega}(\R \PP^n)$ of $\R \PP^n$ (Corollary \ref{magiccor}). 

We offer several methods to explicitly compute $V_{K,Q}$. For the first one, we relate this weighted extremal function to:
\begin{enumerate}
\item the extremal function $V_{B_{n+1}}$ of the {\it real $(n+1)-$ball} 
$$B_{n+1}=\{(u_0,...,u_n)\in \R^{n+1}: \sum_{j=0}^n u_j^2\leq1\}$$
in $\R^{n+1}\subset \C^{n+1}$ as well as 
\item the extremal function $V_{\tilde K}$ of the {\it real $n-$sphere}
$$\tilde K =  \{ (u_0,...,u_n)\in \R^{n+1}: \sum_{j=0}^n u_j^2=1\}$$
in $\R^{n+1}$ {\it considered as a compact subset of the complexified $n-$sphere}
$$A :=  \{ (W_0,...,W_n)\in \C^{n+1}: \sum_{j=0}^n W_j^2=1\}$$ 
in $\C^{n+1}$. This function is the {\it Grauert tube function} of $\tilde K$ in $A$; cf., \cite{Z}.
\end{enumerate}

A similar (perhaps simpler) idea is  a relation between $V_{K,Q}$ and 
\begin{enumerate}
\item the extremal function $V_{B_{n}}$ of the {\it real $n-$ball} 
 $$B_n:= \{(u_1,...,u_n)\in \R^{n}: \sum_{j=1}^n u_j^2\leq1\}$$
 in $\R^n\subset \C^{n}$ and
\item the homogeneous extremal function $H_S$ of the {\it real $n-$upper hemisphere}
$$S:=\{(u_0,...,u_n)\in \R^{n+1}: \sum_{j=0}^n u_j^2\leq1, \ u_0 >0\}$$
in $\R^{n+1}$ considered as a subset of $A$
\end{enumerate}
\noindent obtained by projecting $S$ onto $B_n$. 

In both cases we appeal to two well-known  and highly non-trivial results: 
\begin{enumerate}
\item using Theorem \ref{blmthm} (or \cite{bar}) we have a foliation of $\C^n\setminus B_n$ (and $\C^{n+1} \setminus B_{n+1}$) by complex ellipses on which $V_{B_n}$ ($V_{B_{n+1}})$ is harmonic; and 
\item using Theorem \ref{sadthm} we have $V_{\tilde K}$ (and $H_S$) is locally bounded on $A$ and is maximal on $A\setminus \tilde K$ (on $A\setminus S$).
\end{enumerate}
\noindent See the next section for statements of Theorems \ref{blmthm} and \ref{sadthm} and section 4 for details of these relations.

Bloom (cf., \cite{BL} and \cite{Bloomtams}) introduced a technique to switch back and forth between certain pluripotential-theoretic notions in $\C^{n+1}$ and their weighted counterparts in $\C^n$; we recall this in the next section. In section 3, we discuss a modification of Bloom's technique suitable for special weights $w$ and we use this modification in section 4 to construct a formula for $V_{K,Q}$ on a neighborhood of $\R^n$ for the set $K=\R^n\subset \C^n$ and weight $w(z)=|f(z)|=|\frac{1}{(1+z^2)^{1/2}}|$. This formula gives an explicit candidate $u\in L(\C^n)$ for $V_{K,Q}$. In section 5 we give another ``geometric'' interpretation of $u$ by observing a relationship with the Lie ball
$$L_n :=\{z=(z_1,...,z_n)\in \C^n: |z|^2 +\{ |z|^4 - |z^2|^2\}^{1/2} \leq 1\}$$
which we use to explicitly compute that $(dd^cu)^n=0$ on $\C^n\setminus \R^n$, verifying that $u=V_{K,Q}$. As a corollary, we compute the Alexander capacity $T_{\omega}(\R \PP^n)$ of $\R \PP^n$. Finally, section 6 utilizes results from \cite{blmr} to compute an explicit formula for the Monge-Amp\`ere measure $(dd^cV_{K,Q})^n$.

\section{Known results on extremal functions} In this section, we list some results and connections about extremal functions, all of which will be utilized.

One particular situation where we know much information about $V_K$ is when $K$ is a convex body in $\R^n$; i.e., $K\subset \R^n$ is compact, convex and int$_{\R^n}K\not =\emptyset$.
\begin{theorem}\label{blmthm}
Let $K\subset \R^n$ be a convex body. Through every point $z\in\C^n\setminus K$ there is either a complex ellipse $E$ with $z\in E$ such that $V_K$ restricted to $E$ is harmonic 
on $E\setminus K$, or there is a complexified real line $L$ with $z\in L$ such that $V_K$ is harmonic on $L\setminus K$. For such $E$, $E\cap K$ is a real ellipse  inscribed in $K$ with the property that for its given eccentricity and orientation, it is the ellipse with largest area  
completely contained in $K$; for such $L$, $L\cap K$ is the longest line segment (for its given direction) completely contained in $K$.
\end{theorem}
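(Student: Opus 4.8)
The plan is to pull $V_K$ back along a well-chosen holomorphic parametrization of a complex ellipse (or complexified line) and to identify the pullback with $\bigl|\log|\zeta|\bigr|$, which is harmonic on $\{|\zeta|\neq1\}$. Write a complex ellipse as $E_{a,b}:=\phi_{a,b}(\C^*)$, where $a,b\in\R^n$ are $\R$-linearly independent and
$$\phi_{a,b}(\zeta):=\tfrac12(a-ib)\,\zeta+\tfrac12(a+ib)\,\zeta^{-1};$$
then $\phi_{a,b}$ is injective on $\C^*$, its restriction to the unit circle $\T$ traces the real ellipse $\phi_{a,b}(\T)=\{a\cos\theta+b\sin\theta:\theta\in\R\}$ with conjugate semi-diameters $a,b$, and $\phi_{a,b}(1/\bar\zeta)=\overline{\phi_{a,b}(\zeta)}$. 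Complexified real lines are the degenerate case $b=0$: $\phi_{a,0}(\zeta)=\tfrac a2(\zeta+\zeta^{-1})$, with real trace the segment $[-a,a]$. Since $(a,b)\mapsto(ta,tb)$, $t>0$, preserves the eccentricity and orientation of the real trace while scaling its area by $t^2$, the hypothesis ``the ellipse of largest area for its given eccentricity and orientation contained in $K$'' means exactly that the direction of $(a,b)$ is fixed and then rescaled so that $t^{*}:=\sup\{t>0:t\,\phi_{a,b}(\T)\subseteq K\}$ equals $1$; a maximal inscribed ellipse necessarily touches $\partial K$. Recall also that a convex body is L-regular, so $V_K$ is continuous, equals $V_K^*$, and is psh on $\C^n$.

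\emph{Upper bound (the easy half).} For any $E_{a,b}$ with $\phi_{a,b}(\T)\subseteq K$, put $u:=V_K\circ\phi_{a,b}$, subharmonic on $\C^*$ as the pullback of a psh function by a holomorphic map. Then $u\le0$ on $\T$ since $V_K\le0$ on $K$; and since $|\phi_{a,b}(\zeta)|\le C|\zeta|$ for $|\zeta|\ge1$ while $V_K(w)\le\log|w|+O(1)$ for $|w|$ large ($K$ having nonempty interior), we get $u(\zeta)\le\log|\zeta|+O(1)$ as $|\zeta|\to\infty$. Comparing $u$ on $\{1<|\zeta|<R\}$ with the harmonic function $\bigl(\max_{|\zeta|=R}u\bigr)\tfrac{\log|\zeta|}{\log R}$ and letting $R\to\infty$ gives $u(\zeta)\le\log|\zeta|$ for $|\zeta|\ge1$; the conjugation symmetry $u(\zeta)=u(1/\bar\zeta)$, valid since $K\subset\R^n$, then gives $u(\zeta)\le\bigl|\log|\zeta|\bigr|$ on $\C^*$. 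Under the standing hypothesis $E_{a,b}\cap K=\phi_{a,b}(\T)$, so this already yields $V_K(z)\le\bigl|\log|\zeta_0|\bigr|$ whenever $z=\phi_{a,b}(\zeta_0)$.

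\emph{Lower bound and choice of the curve (the hard half).} Given $z\in\C^n\setminus K$, one must exhibit a maximal inscribed pair $(a,b)$ (or $(a,0)$) and $\zeta_0\in\C^*$ with $z=\phi_{a,b}(\zeta_0)$, and prove $V_K(z)\ge\bigl|\log|\zeta_0|\bigr|$. Existence of such a curve would follow from a degree/continuity argument: the rescaled maximal inscribed ellipses depend continuously on their shape, and, using convexity of $K$ to control the parameter space, one shows they sweep out $\C^n\setminus K$ (in fact exactly one through each point, which gives the foliation), the degenerate line case following by a limiting argument, as ellipses of increasing eccentricity degenerate to a segment, via local uniform convergence of the $V_K\circ\phi_{a,b}$. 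For the inequality $V_K(z)\ge\bigl|\log|\zeta_0|\bigr|$ I would first settle the model case $K=B_n$, where the maximal inscribed ellipses are the $\phi_{a,b}(\T)$ with $a\cdot b=0$ and $\max(|a|,|b|)=1$ and where $V_{B_n}\circ\phi_{a,b}=\bigl|\log|\zeta|\bigr|$ is checked against Lundin's explicit formula for $V_{B_n}$ (in the degenerate case against $V_{[-1,1]}(\zeta)=\log|\zeta+\sqrt{\zeta^2-1}|$, harmonic off $[-1,1]$). For a general convex body $K$, the extremal property ``largest area for the given shape'' should force an \emph{enveloping-quadric lemma}: $\phi_{a,b}(\T)$ is also the maximal inscribed ellipse of that shape in some solid ellipsoid $\mathcal E\supseteq K$ circumscribing $K$ along the contact points of $\phi_{a,b}(\T)$ with $\partial K$ (a circumscribing slab in the parallel-hyperplane degenerate case); an $\R$-affine map carrying $\mathcal E$ to $B_n$ then reduces to the model case and gives $V_{\mathcal E}\circ\phi_{a,b}=\bigl|\log|\zeta|\bigr|$, so $V_K\ge V_{\mathcal E}$ (from $K\subseteq\mathcal E$) together with the upper bound forces $V_K\circ\phi_{a,b}=\bigl|\log|\zeta|\bigr|$, harmonic on $\{|\zeta|\neq1\}$; hence $V_K$ is harmonic on $E_{a,b}\setminus K$ (or $L\setminus K$), and the inscribed-ellipse/segment characterization is exactly how $(a,b)$ was chosen. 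I expect the main obstacle to be this enveloping-quadric lemma together with the sweeping-out claim --- i.e.\ proving that maximal area for fixed eccentricity and orientation forces the contact set on $\partial K$ onto a single circumscribed quadric, and that the resulting complex ellipses foliate $\C^n\setminus K$ --- which is precisely where convexity of $K$ is used and which constitutes the substance of the Baran--Lundin type results behind the theorem.
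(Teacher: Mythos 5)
Your ``easy half'' is sound: pulling $V_K$ back by $\phi_{a,b}$, the two-constants comparison plus the reflection $u(\zeta)=u(1/\bar\zeta)$ gives $V_K\circ\phi_{a,b}\le|\log|\zeta||$ whenever the real trace is inscribed in $K$ (and indeed $E\cap K=\phi_{a,b}(\T)$ is automatic, since the real points of the complex ellipse are exactly its real trace). But the two steps you yourself flag as obstacles are genuine gaps, and they are the entire content of the theorem. First, the sweeping-out claim --- that through every $z\in\C^n\setminus K$ there passes the complexification of a maximal inscribed ellipse or of a maximal segment --- is only asserted to follow from an unspecified degree/continuity argument; nothing in the proposal sets up the parameter space, controls the degeneration of ellipses to segments, or explains where convexity enters to give surjectivity onto $\C^n\setminus K$. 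Second, the lower bound rests entirely on the ``enveloping-quadric lemma,'' which is not proved and, as stated (a genuine solid ellipsoid $\mathcal E\supseteq K$ in the nondegenerate case), is false. Take $K=[-1,1]^2\subset\R^2$ and the maximal inscribed ellipse of zero eccentricity, i.e.\ the unit circle: its complexification really is one of the curves of the theorem, since $V_K(z)=\max\{V_{[-1,1]}(z_1),V_{[-1,1]}(z_2)\}$ restricts on it to $|\log|\zeta||$. Yet no nondegenerate ellipse containing the square has the unit circle as its largest inscribed circle: the largest circle inscribed in an ellipse is centered at the centre with radius the semi-minor axis and touches at the minor-axis endpoints, so the supporting line of $\mathcal E$ there would have to support the square at the midpoint of a side; this forces the minor axis to be a coordinate axis, and then the corners $(\pm1,\pm1)$ lie outside $\mathcal E$. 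Only the degenerate slab $\{|x_j|\le1\}$ works. So the lemma must at least be reformulated to admit degenerate quadrics (slabs, elliptic cylinders) even for nondegenerate extremal ellipses, and proving that such an enveloping body always exists for an arbitrary maximal inscribed ellipse of an arbitrary convex body is essentially the substance of the theorem, not a reduction of it.

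For comparison, the paper does not prove this statement: it quotes it from Theorem 5.2 and Section 6 of \cite{blm2} (see also \cite{blm}, \cite{bar}), where the result is obtained by different and substantially heavier means --- the exterior of the convex body is foliated by holomorphic leaves of the homogeneous complex Monge--Amp\`ere equation, and the leaves, parametrized as $a+c\zeta+\bar c/\zeta$, are identified with the maximal-area inscribed ellipses and maximal segments by a variational computation. Within your scheme, the degenerate (segment) case can in fact be completed: a longest chord of $K$ in a given direction is an affine diameter, so parallel supporting hyperplanes at its endpoints give a slab $\Sigma\supseteq K$ with $V_K\ge V_{\Sigma}$, which yields the lower bound there. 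The nondegenerate-ellipse case, however, remains unproved as proposed.
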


We refer the reader to Theorem 5.2 and Section 6 of \cite{blm2}; see also \cite{blm}. The ellipses and lines in Theorem \ref{blmthm} have parametrizations of the form
$$
F(\zeta) = a + c\zeta + \frac{\bar c}{\zeta},
$$
$a\in\R^n$, $c\in\C^n$, $\zeta \in \C$ with $V_K(F(\zeta))=\log^+|\zeta|$ ($\bar c$ denotes the component-wise complex conjugate of $c$). These are higher dimensional analogs of the classical Joukowski function $\zeta\mapsto \frac{1}{2}(\zeta + \frac{1}{\zeta})$. For $K = B_n$, the real unit ball in $\R^n
\subset \C^n$, the real ellipses $E\cap B_n$ and lines $L\cap B_n$ in Theorem \ref{blmthm} are symmetric with respect to the origin and, other than great circles in the real boundary of $B_n$, each $E\cap B_n$ and $L\cap B_n$ hits this real boundary at exactly two antipodal points. Lundin proved \cite{lunpre}, \cite{bar} that 
\begin{equation}
\label{eq:realball} V_K(z) =\frac{1}{2} \log h(|z|^2 + |z^2 -
1|),
\end{equation}
where $|z|^2 = \sum |z_j|^2, \ z^2 = \sum z_j^2,$ and $h$ is the inverse Joukowski map 
$h(\frac{1}{2}(t + \frac{1}{t})) = t$ for $1 \leq t \in \R$. In
this example, the Monge-Amp\`ere measure $(dd^cV_K)^n$ has the
explicit form
$$(dd^cV_K)^n = n! \ vol(K) \ \frac{dx}{(1-|x|^2)^{\frac{1}{2}}}:=n! \ vol(K) \ \frac{dx_1 \wedge \cdots \wedge dx_n}{(1- |x|^2)^{\frac{1}{2}}} $$
(see also (\ref{eq:monge})).

We may consider the class
$$H:=\{u \in L(\C^n): \ u(tz) =\log {|t|} +u(z), \ t\in \C, \ z \in \C^n \} $$ 
of {\it logarithmically homogeneous} psh functions and, for $E\subset \C^n$, the {\it homogeneous extremal function of $E$}
denoted by $H_E^*$ where
$$H_E(z):=\max [0,\sup \{u(z):u \in H, \ u\leq 0 \ \hbox{on} \
    E\}]. $$
Note that $H_E(z)\leq V_E(z)$. If $E$ is compact, we have
$$H_E(z)=\max [0,\sup \{\frac{1}{deg (h)}\log {|h(z)|}: h \ \hbox{homogeneous
polynomial}, \ ||h||_E\leq 1\}]. $$
The $H-$principle of Siciak (cf., \cite{Kl}) gives a one-to-one correspondence between
\begin{enumerate}
\item homogeneous polynomials $H_d(t,z)$ of a fixed degree $d$ in $\C_t\times \C^n_z$ and polynomials $p_d(z)=H_d(1,z)$ of degree $d$ in $\C^n_z$ via 
$$H_d(t,z):=t^dp_d(z/t); $$
\item psh functions $h(t,z)$ in $H(\C_t\times \C^n_z)$ and psh functions $u(z)=h(1,z)$ in $L(\C^n_z)$ via 
$$h(t,z)=\log {|t|} +u(z/t) \ \hbox{if} \ t\not = 0; \ h(0,z):=\limsup_{(t,\zeta)\to (0,z)}h(t,z);  $$
\item  extremal functions $V_E$ of $E\subset \C^n_z$ and homogeneous extremal functions $H_{1\times E}$ via 2.; i.e.,
\begin{equation}\label{easyfcn}V_E(z)=H_{1\times E}(1,z). \end{equation}
\end{enumerate}

\noindent To expand upon 3., given a compact set $E\subset \C^n$, if one forms the circled set ($S$ is circled means $z\in S \iff e^{i\theta}z\in S$)
$$Z(E):=\{(t,tz)\in \C^{n+1}: z\in E, \ |t|=1\} \subset \C^{n+1},$$
then 
$$H_{Z(E)}(1,z) = V_E(z);$$
indeed, for $t\not = 0$,
 $$H_{Z(E)}(t,z) = V_E(z/t)+\log |t|.$$

Note that $Z(E)$ is the ``circling'' of the set $\{1\}\times E\subset \C^{n+1}$. In general, if $E\subset \C^n$, the set 
$$E_c:=\{e^{i\theta}z: z\in E, \ \theta \in \R\}$$
is the smallest circled set containing $E$. If $E$ is compact, then $\hat E_c$, the polynomial hull of $E_c$, is given by 
$$\hat E_c=\{tz: \ z\in E, \ |t|\leq 1\}$$
which coincides with the {\it homogeneous polynomial hull} of $E$:
$$\hat E_{hom}:=\{z\in \C^n: |p(z)|\leq ||p||_E \ \hbox{for all homogeneous polynomials} \ p\}.$$
We have $H_{E_c}=V_{E_c}$. For future use we remark that if $E\subset F$ with $H_E=H_F=V_F$, it is {\it not} necessarily true that $V_E=H_E$. As a simple example, we can take $E=B_n$, the real unit ball, and $F=\hat E_c=\hat E_{hom}$. Then $F=L_n$, the {\it Lie ball} 
$$L_n =\{z=(z_1,...,z_n)\in \C^n: |z|^2 +\{ |z|^4 - |z^2|^2\}^{1/2} \leq 1\}$$
(see section 5). Here, $V_{B_n}\not = V_{L_n}$.

More generally, if $K\subset \C^n$ is closed and $w$ is a weight function on $K$, we can form the circled set 
$$Z(K,Q):= \{(t,tz)\in \C^{n+1}: z\in E, \ |t|=w(z)\}$$
and then
$$H_{Z(K,Q)} (1,z) = V_{K,Q}(z);$$
indeed, for $t\not = 0$,
 $$H_{Z(K,Q)} (t,z) = V_{K,Q}(z/t)+\log |t|.$$
This is the device utilized by Bloom (cf., \cite{BL} and \cite{Bloomtams}) alluded to in the introduction. 

Finally, we mention the following beautiful result of Sadullaev \cite{Sad}. 
\begin{theorem}\label{sadthm} Let $A$ be a pure $m-$dimensional, irreducible analytic subvariety of $\C^n$ where $1\leq m \leq n-1$. Then $A$ is algebraic if and only if for some (all) $K\subset A$ compact and nonpluripolar in $A$, $V_K$ in (\ref{vk}) is locally bounded on $A$.

\end{theorem}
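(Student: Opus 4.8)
This theorem is Sadullaev's, and in this paper only the implication ``$A$ algebraic $\Rightarrow V_K$ locally bounded on $A$'' will be used; nonetheless I sketch how I would establish both directions.

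For ``$A$ algebraic $\Rightarrow V_K$ locally bounded on $A$'', the plan is first to reduce to one convenient set: if $K_1,K_2\subset A$ are compact and nonpluripolar, the Lelong-class description of $V_{K_2}^*$ gives $V_{K_1}^*\le V_{K_2}^*+\sup_{K_2}V_{K_1}^*$ on $\C^n$, with $\sup_{K_2}V_{K_1}^*<\infty$ since $V_{K_1}^*\in L^+(\C^n)$ is locally bounded on $\C^n$; hence local boundedness on $A$ is independent of the chosen compact nonpluripolar set. Put $d=\deg A$; after a generic linear change of coordinates the projection $\pi(z_1,\dots,z_n)=(z_1,\dots,z_m)$ restricts to a proper branched covering $\pi|_A\colon A\to\C^m$ of degree $d$, and I would take $K:=\pi|_A^{-1}(\overline B)$ for a closed ball $\overline B\subset\C^m$, which is compact and nonpluripolar in $A$. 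For a polynomial $p$ of degree $k$, the elementary symmetric functions $\sigma_1,\dots,\sigma_d$ of the fibre values $\{p(z'):z'\in A,\ \pi(z')=w\}$ are, by elimination theory, polynomials on $\C^m$ of degree $\le C(A)\,k$ with $\|\sigma_j\|_{\overline B}\le\binom{d}{j}\|p\|_K^{\,j}$. If $\|p\|_K\le1$ and $w=\pi(z)$, then $p(z)$ is a root of $T^d-\sigma_1T^{d-1}+\cdots\pm\sigma_d$, so $|p(z)|\le 2\max_j|\sigma_j(w)|^{1/j}\le C\exp\bigl(C(A)\,k\,(\log^+|w|+C')\bigr)$ using $V_{\overline B}(w)\le\log^+|w|+C'$; taking $\tfrac1k\log$ and the supremum over admissible $p$ yields $V_K(z)\le C(A)\log^+|z|+C''$ on $A$, which is locally bounded.

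For the converse, assume $V_K$ is locally bounded on $A$ for some compact nonpluripolar $K\subset A$. Then $V_K^*\in L^+(\C^n)$, so $V_K^*$ is locally bounded on $\C^n$ and $V_K^*(z)-\log^+|z|$ is bounded near infinity. Passing to $\PP^n$ via the identification recalled in the introduction, $\phi:=V_K^*-u_0$ lies in $PSH(\PP^n,\omega)$, and the two-sided logarithmic bound on $V_K^*$ near the hyperplane at infinity $H_\infty$ together with local boundedness on $\C^n$ forces $\phi$ to be \emph{bounded} on all of $\PP^n$. Equivalently, $A$ carries the locally bounded psh function $u=V_K^*|_A=(\phi+u_0)|_A$, which differs from $u_0|_A=\tfrac12\log(1+|z|^2)|_A$ by a bounded amount outside a compact subset of $A$. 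The heart of the argument is then to deduce $\int_A\omega^m<\infty$, equivalently $\mathrm{vol}\bigl(A\cap\{|z|<R\}\bigr)=O(R^{2m})$: one works on the analytic space $A$, where $u$ has a Bedford--Taylor Monge--Amp\`ere current $(dd^cu|_A)^m$ on $A_{\mathrm{reg}}$ (extended across $A_{\mathrm{sing}}$), and a Lelong--Jensen/exhaustion argument, using that $u$ and $u_0|_A$ agree up to $O(1)$ at infinity on $A$, identifies its total mass with $\int_A(dd^cu_0|_A)^m=\int_A\omega^m$ up to boundary contributions killed by the boundedness of $\phi$; in particular the common value is finite. Given the volume bound, Bishop's analytic-extension theorem applied to $\overline A\subset\PP^n$ (analytic off $H_\infty$, locally finite $2m$-dimensional Hausdorff measure near $H_\infty$) and Chow's theorem show $\overline A$ is an algebraic subvariety of $\PP^n$, hence $A$ is algebraic; alternatively one invokes directly Stoll's theorem that finite volume order characterizes algebraicity.

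I expect the last step to be the main obstacle: converting ``there is an $\omega$-psh function on $\PP^n$ that remains bounded along $A$'' into ``$A$ has finite volume near $H_\infty$'' requires (i) Monge--Amp\`ere/intersection theory on the possibly singular space $A$ (via Demailly's theory of psh functions on analytic spaces, or by working on $A_{\mathrm{reg}}$ while controlling $A_{\mathrm{sing}}$), and (ii) genuine control near $H_\infty$, ruling out the ``spiralling at infinity'' exhibited by transcendental varieties such as $\{w^2=e^z\}$; this is where Sadullaev's proof does its real work and where the finite-volume-order criterion for algebraicity is essential. The two imported black boxes (Bishop's extension theorem and Chow's theorem) and the ``some/all'' equivalence used above are comparatively routine.
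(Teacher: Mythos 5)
The paper does not prove this statement: Theorem \ref{sadthm} is imported verbatim from Sadullaev's paper \cite{Sad}, so there is no internal proof to compare against, and only the implication ``algebraic $\Rightarrow$ locally bounded'' is actually used later. Your sketch of that implication (Noether-normal projection, elementary symmetric functions of the fibre values, elimination theory, Bernstein--Walsh on the base ball) is indeed the standard argument and is sound in outline. But your reduction to a single convenient $K$ contains a genuine error that recurs in the converse: you write ``$V_{K_1}^*\in L^+(\C^n)$ is locally bounded on $\C^n$'' and later ``Then $V_K^*\in L^+(\C^n)$.'' Since $K_i\subset A$ and $A$ is an analytic variety of dimension $m\leq n-1$, the sets $K_i$ are pluripolar in $\C^n$, so $V_{K_i}^*\equiv+\infty$ and $V_{K_i}=+\infty$ off $A$ --- the paper says exactly this in the sentence following the theorem. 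All regularizations and comparisons must be taken \emph{intrinsically in $A$}, and the ``some/all'' equivalence cannot be obtained from Lelong-class bounds in $\C^n$; the clean route is simply: ``some'' $\Rightarrow$ $A$ algebraic $\Rightarrow$ (by the forward direction, which works for every compact nonpluripolar subset) ``all.''

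The converse direction is not proved. After correcting the false starting point above, what you actually have is a function $(V_K|_A)^*$ that is locally bounded on $A$; the passage from this to a \emph{bounded} $\omega$-psh object on $\PP^n$, and then to $\int_A\omega^m<\infty$ via a Lelong--Jensen argument on the (possibly singular, a priori infinite-volume) space $A$, is precisely the hard content of Sadullaev's theorem, and you concede as much. One also needs the growth $V_K|_A(z)\leq\log|z|+O(1)$ on $A$ (membership in $L(A)$), which does not follow from local boundedness alone. So the proposal establishes the forward implication modulo the pluripolarity slip, outlines a plausible Bishop--Stoll--Chow strategy for the converse, but leaves its pivotal step as a black box; as a proof of the ``if and only if'' it is incomplete, which is consistent with the paper's decision to cite \cite{Sad} rather than argue.
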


\noindent Note that $A$ and hence $K$ is pluripolar in $\C^n$ so $V_K^*\equiv \infty$; moreover, $V_K=\infty$ on $\C^n\setminus A$. In this setting, $V_K|_A$ (precisely, its usc regularization in $A$) is maximal on the regular points $A^{reg}$ of $A$ outside of $K$; i.e., $(dd^cV_K|_A)^m=0$ there, and $V_K|_A \in L(A)$. Here $L(A)$ is the set of psh functions $u$ on $A$ ($u$ is psh on $A^{reg}$ and locally bounded above on $A$) with the property that $u(z) - \log |z| = 0(1)$ as $|z| \to \infty$ through points in $A$, see \cite{Sad}.

\section{Relating extremal functions} Let $K\subset \C^n$ be closed and let $f$ be holomorphic on a neighborhood $\Omega$ of $K$. We define $F:\Omega \subset \C^n\to \C^{n+1}$ as 
$$F(z):=(f(z),zf(z))=W=(W_0,W')=(W_0,W_1,...,W_n)$$
where $W'=(W_1,...,W_n)$. Thus 
$$W_0= f(z), \ W_1 = z_1f(z), ..., \ W_n=z_nf(z).$$
Moreover we assume there exists a polynomial $P=P(z_0,z)$ in $\C^{n+1}$ with $P(f(z),z)=0$ for $z\in \Omega$; i.e., $f$ is {\it algebraic}. Taking such a polynomial $P$ of minimal degree, let 
\begin{equation}\label{variety} A:=\{W\in \C^{n+1}:P(W_0,W'/W_0)=P(W_0,W_1/W_0,...,W_n/W_0)=0 \}.\end{equation}
Note that writing $P(W_0,W'/W_0)=\tilde P(W_0,W')/W_0^s$ where $\tilde P$ is a polynomial in $\C^{n+1}$ and $s$ is the degree of $P(z_0,z)$ in $z$ we see that $A$ differs from the algebraic variety 
$$\tilde A:=\{W\in \C^{n+1}:\tilde P(W_0,W')=0\}$$
by at most the set of points in $A$ where $W_0=0$, which is pluripolar in $A$. 
Thus we can apply Sadullaev's Theorem \ref{sadthm} to nonpluripolar subsets of $A$. Now $P(f(z),z)=0$ for $z\in \Omega$ says that 
$$F(\Omega)=\{(f(z),zf(z)): z \in \Omega\}\subset A.$$
We can define a weight function $w(z):=|f(z)|$ which is well defined on all of $\Omega$ and in particular on $K$; as usual, we set
\begin{equation}\label{need2}Q(z):=-\log w(z) = -\log |f(z)|.\end{equation}
We will need our potentials defined in (\ref{need2}) to satisfy 
\begin{equation}\label{need}Q(z):=\max \{-\log |W_0|: W\in A, \ W'/W_0=z\}\end{equation}
and we mention that (\ref{need}) can give an a priori definition of a potential for those $z\in \C^n$ at which there exist $W\in A$ with $W'/W_0=z$. 

We observe that for $K\subset \Omega$, we have two natural associated subsets of $A$: 
\begin{enumerate}
\item $\tilde K:= \{W\in A: W'/W_0\in K\}$ and 
\item $F(K)=\{W=F(z)\in A: z \in K\}$.
\end{enumerate}
\noindent Note that $F(K)\subset \tilde K$ and the inclusion can be strict.

\begin{proposition}\label{exfcn} Let $K\subset \C^n$ be closed with $Q$ in (\ref{need2}) satisfying (\ref{need}). If $F(K)$ is nonpluripolar in $A$, 
$$V_{K,Q}(z)-Q(z)\leq H_{F(K)}(W) \ \hbox{for} \ z\in \Omega \ \hbox{with} \ f(z)\not =0 $$
where the inequality is valid for $W=F(z)\in F(\Omega)$.
\end{proposition}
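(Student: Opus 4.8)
The plan is to relate the weighted extremal polynomials on $K\subset\C^n$ to homogeneous extremal polynomials on $F(K)\subset A\subset\C^{n+1}$ via the substitution coming from $F$. First I would take an arbitrary polynomial $p$ of degree $d$ on $\C^n$ with $\|pe^{-dQ}\|_K\le 1$, i.e. $|p(z)||f(z)|^d=|p(z)|w(z)^d\le 1$ on $K$, and use it to produce a candidate competitor for the homogeneous extremal problem defining $H_{F(K)}$. The natural guess is the homogeneous polynomial $h(W):=W_0^d\,p(W'/W_0)$ on $\C^{n+1}$ of degree $d$; this is an honest polynomial because $p$ has degree $d$. On $F(K)$ we have $W=(f(z),zf(z))$ so $W_0=f(z)$ and $W'/W_0=z$, hence $h(F(z))=f(z)^d p(z)$, whose modulus is exactly $|p(z)|w(z)^d\le 1$. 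Therefore $\|h\|_{F(K)}\le 1$, and by the homogeneous-polynomial characterization of $H_{F(K)}$ (valid since $F(K)$ is compact — here we need $K$ closed, so one should restrict to a compact exhaustion of $K$, or note $F(K)$ is relatively closed in $A$ and work with compact subsets, taking a sup) we get $\frac1d\log|h(W)|\le H_{F(K)}(W)$ for all $W$. Evaluating at $W=F(z)$ gives $\frac1d\log|p(z)|+\log|f(z)|\le H_{F(K)}(F(z))$, i.e. $\frac1d\log|p(z)|-Q(z)\le H_{F(K)}(F(z))$. Taking the supremum over all such $p$ yields $V_{K,Q}(z)-Q(z)\le H_{F(K)}(F(z))$, which is the claim.

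The subtle points I would watch are the following. First, the degree bookkeeping: if $\deg p<d$ one still wants a degree-$d$ homogenization, and since $\|\cdot\|_K$ is a sup one can always homogenize at the exact degree, so $h(W)=W_0^{\deg p}p(W'/W_0)$ is the correct choice and it is genuinely a polynomial (no negative powers of $W_0$). Second, the hypothesis that $Q$ satisfies (\ref{need}), namely $Q(z)=\max\{-\log|W_0|:W\in A,\ W'/W_0=z\}$, is what guarantees that $w(z)=|f(z)|$ is the \emph{largest} admissible weight consistent with lying on $A$; in the estimate above I only used $|f(z)|=w(z)$ directly on $F(K)$, but (\ref{need}) is what makes $F(K)$ (rather than the larger $\tilde K$) the right set and ensures the value $H_{F(K)}(F(z))$ is not spoiled by other branches — it also matters if one wanted the reverse inequality later. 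Third, one must be careful that $F(z)$ is a well-defined point of $A$ with $W_0=f(z)\ne 0$, which is exactly the restriction $f(z)\ne 0$ in the statement, so that $W'/W_0=z$ makes sense and the evaluation $h(F(z))=f(z)^{\deg p}p(z)$ is legitimate.

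The main obstacle, such as it is, is not the inequality itself — it is a one-line substitution argument — but rather checking that $H_{F(K)}$ can be computed via homogeneous polynomials when $K$ (hence $F(K)$) is merely closed rather than compact. I would handle this by writing $K=\bigcup_j K_j$ with $K_j$ compact, nonpluripolar in $\C^n$ for $j$ large (possible since $F(K)$ is assumed nonpluripolar in $A$, so some $F(K_j)$ is too), applying the compact case to get $V_{K_j,Q}(z)-Q(z)\le H_{F(K_j)}(F(z))$, and then letting $j\to\infty$: the left side increases to $V_{K,Q}(z)-Q(z)$ while $H_{F(K_j)}\ge H_{F(K)}$ decreases — so in fact one gets the inequality at each finite stage already, since $H_{F(K_j)}\ge H_{F(K)}$ would run the wrong way. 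Cleaner: since $\|pe^{-dQ}\|_K\le 1$ implies $\|pe^{-dQ}\|_{K_j}\le 1$, any competitor for $V_{K,Q}$ is a competitor for $V_{K_j,Q}$, and $H_{F(K_j)}(W)\le H_{F(K)}(W)$ because $F(K_j)\subset F(K)$ forces the sup defining $H$ over a larger constraint set to be smaller; so $\frac1d\log|p(z)|-Q(z)\le H_{F(K_j)}(F(z))\le H_{F(K)}(F(z))$, and taking sup over $p$ finishes it. So the honest work is just making the compact-case identity $H_{F(K_j)}=\sup\{\frac1{\deg h}\log|h|:\|h\|_{F(K_j)}\le1\}$ applicable, which is the displayed fact about $H_E$ for compact $E$ recalled in Section 2, together with the observation that $F(K_j)$ being nonpluripolar keeps $H_{F(K_j)}$ from degenerating.
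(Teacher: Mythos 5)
Your core argument is correct and is essentially the paper's proof: take a polynomial $p$ near-extremal for $V_{K,Q}(z)$ with $\|w^{\deg p}p\|_K\le 1$, homogenize it to $\tilde p(W)=W_0^{\deg p}\,p(W'/W_0)$, note that on $F(K)$ one has $|\tilde p(F(\zeta))|=w(\zeta)^{\deg p}|p(\zeta)|\le 1$, and conclude $\frac{1}{\deg p}\log|p(z)|-Q(z)\le H_{F(K)}(F(z))$, then take the sup over $p$. Two remarks. First, the compactness detour at the end is unnecessary, and in carrying it out you state a monotonicity backwards: if $F(K_j)\subset F(K)$ then $H_{F(K)}\le H_{F(K_j)}$, not $H_{F(K_j)}\le H_{F(K)}$, since the constraint ``$u\le 0$ on the larger set'' is the more restrictive one; as written your chain $\frac1d\log|p(z)|-Q(z)\le H_{F(K_j)}(F(z))\le H_{F(K)}(F(z))$ uses the wrong direction. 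Fortunately no exhaustion is needed at all: the only direction you use is that a homogeneous polynomial $h$ with $\|h\|_E\le 1$ satisfies $\frac{1}{\deg h}\log|h|\le H_E$, and this holds for an arbitrary set $E$ because $\frac{1}{\deg h}\log|h|$ belongs to the class $H$ and is $\le 0$ on $E$; compactness is only needed for the reverse identity (that $H_E$ \emph{equals} the polynomial sup), which you never invoke. Second, a minor difference from the paper: the paper uses hypothesis (\ref{need}) to get $-Q(z)\le\log|W_0|$ for \emph{every} $W\in A$ with $W'/W_0=z$, so its estimate holds at all such $W$; you evaluate only at $W=F(z)$, where $-Q(z)=\log|f(z)|=\log|W_0|$ holds with equality, which suffices since the proposition asserts the inequality only at $W=F(z)$.
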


\noindent This reduces to (\ref{easyfcn}) if $w(z)\equiv 1$ ($Q(z)\equiv 0$) in which case $F(K)= \{1\}\times K$.

\begin{remark} In general, Proposition \ref{exfcn} only gives estimates for $V_{K,Q}(z)$ if $z\in \Omega$ and $f(z)\not =0$. We will use this and Lemma \ref{lowerest} in the next section to get a formula for $V_{K,Q}(z)$ when $K=\R^n\subset \C^n$ and the weight $w(z)=|f(z)|=|\frac{1}{(1+z^2)^{1/2}}|$ for $z$ in a neighborhood $\Omega$ of $\R^n$ and in section 5 we will verify that this formula is valid on all of $\C^n$. 

\end{remark}
\begin{proof}
First note that for $z\in K$ and $W=F(z)\in F(K)$, given a polynomial $p$ in $\C^n$,
$$|w(z)^{deg p}p(z)|=|f(z)|^{deg p}|p(z)|= |W_0^{deg p} p(W'/W_0)|=|\tilde p(W)|$$ where $\tilde p$ is the homogenization of $p$. Thus $||w^{deg p}p||_K\leq 1$ implies $|\tilde p|\leq 1$ on $F(K)$. Now fix $z\in \Omega$ at which $f(z)\not =0$ (so $Q(z)<\infty$) and fix $\epsilon >0$. Choose a polynomial $p=p(z)$ with $||w^{deg p}p||_K\leq 1$ and 
$$\frac{1}{deg p} \log |p(z)|\geq V_{K,Q}(z) -\epsilon.$$
Thus
$$V_{K,Q}(z) -\epsilon -Q(z)\leq \frac{1}{deg p} \log |p(z)|-Q(z).$$
For $W\in A$ with $W_0\not =0$ and $W'/W_0=z$, the above inequality reads:
$$V_{K,Q}(z) -\epsilon -Q(z)\leq \frac{1}{deg p} \log |p(W'/W_0)|-Q(W'/W_0)\leq \frac{1}{deg p} \log |p(W'/W_0)|+\log |W_0|$$
from (\ref{need}). But
$$\frac{1}{deg p} \log |p(W'/W_0)|+\log |W_0|=\frac{1}{deg p} \log |W_0^{deg p}p(W'/W_0)|=\frac{1}{deg \tilde p} \log |\tilde p(W)|.$$
This shows that 
$$V_{K,Q}(z) -\epsilon -Q(z)\leq \sup \{\frac{1}{deg \tilde p} \log |\tilde p(W)|: |\tilde p|\leq 1 \ \hbox{on}  \ F(K)\}\leq H_{F(K)}(W).$$

\end{proof}

Next we prove a lower bound involving $\tilde K$ which will be applicable in our special case.

\begin{definition} \label{three4} \rm 
Let $A\subset\C^{n+1}$ be an algebraic hypersurface.  We say that $A$ is \emph{bounded on lines  through the origin}  if there exists a uniform constant $c\geq 1$ such that for all $W\in A$, 
 if $\alpha W\in A$ also holds for some $\alpha\in\C$,  then $|\alpha|\leq c$.
\end{definition}

\begin{example} \label{three5} \rm
A simple example of a hypersurface bounded on lines through the origin is one given by an equation of the form $p(W)=1$, where $p$ is a homogeneous polynomial.  In this case, if $\alpha W\in A$ then $$1=p(\alpha W)=\alpha^{deg p}p(W)=\alpha^{deg p},$$ 
so $\alpha$ must be a root of unity. Hence we may take $c=1$. 
\end{example}



In order to get a lower bound on $V_{K,Q}-Q$ we need to be able to extend $Q$ to a function in $L(\C^n)$.

\begin{lemma}\label{lowerest} 
Let $K\subset\C^n$ and let $Q(z)=-\log |f(z)|$ with $f$ defined and holomorphic on $\Omega\supset K$. Define $A$ as in (\ref{variety}) and assume $Q$ satisfies (\ref{need}). We suppose $A$ is bounded on lines through the origin, $\tilde K$ is a nonpluripolar subset of $A$, and that $Q$ has an extension to $\C^n$ (which we still call $Q$) satisfying (\ref{need}) such that $Q\in L(\C^n)$. Then given $z\in\C^n$, 
$$
H_{\tilde K}(W)\leq V_{\tilde K}(W)\leq V_{K,Q}(z)-Q(z)
$$
for all $W=(W_0,W')\in A$ with $W'/W_0=z$.
\end{lemma}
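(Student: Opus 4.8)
The plan is to exploit the correspondence from Section 2 between homogeneous extremal functions in $\C^{n+1}$ and weighted extremal functions in $\C^n$, but running it in the reverse direction from Proposition \ref{exfcn}: instead of pushing polynomials on $K$ forward to homogeneous polynomials on $F(K)$, I would pull homogeneous polynomials on $\tilde K$ back to weighted polynomials on $K$. First I would take a homogeneous polynomial $h$ of degree $d$ on $\C^{n+1}$ with $\|h\|_{\tilde K}\leq 1$, and dehomogenize it: set $p(z):=h(1,z)$, a polynomial of degree $\leq d$ on $\C^n$. For $W\in A$ with $W'/W_0=z$ and $W_0\neq 0$ we have $h(W)=W_0^d h(1,W'/W_0)=W_0^d p(z)$, so $|h(W)|=|W_0|^d|p(z)|$. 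Combining this with the hypothesis that $Q$ on $K$ satisfies (\ref{need}) — which forces $-\log|W_0|\leq Q(z)$, i.e. $|W_0|\leq e^{-Q(z)}=w(z)$, for every such $W$ — and taking the supremum over all $W\in\tilde K$ lying over a point $z\in K$, I would deduce $w(z)^d|p(z)|\leq \sup_{W\in\tilde K}|h(W)|\leq 1$. Hence $p$ is admissible for $V_{K,Q}$, giving $\frac{1}{d}\log|p(z)|\leq V_{K,Q}(z)$ for all $z\in\C^n$.

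Next I would need to turn this into the stated bound $V_{\tilde K}(W)\leq V_{K,Q}(z)-Q(z)$ for $W$ over $z$. The point is that for $W\in A$ over $z$ we have $\frac{1}{d}\log|h(W)| = \frac{1}{d}\log|W_0|^d + \frac{1}{d}\log|p(z)| = \log|W_0| + \frac{1}{d}\log|p(z)| \leq -Q(z) + V_{K,Q}(z)$, again using (\ref{need}) in the form $\log|W_0|\leq -Q(z)$. Taking the supremum over all homogeneous polynomials $h$ with $\|h\|_{\tilde K}\leq 1$ gives $H_{\tilde K}(W)\leq V_{K,Q}(z)-Q(z)$. The remaining gap is between $H_{\tilde K}$ and $V_{\tilde K}$; here is where the hypotheses "$A$ bounded on lines through the origin" and "$Q$ extends to an element of $L(\C^n)$" come in. To pass from arbitrary polynomials $q$ on $\C^{n+1}$ (which define $V_{\tilde K}$ via (\ref{vk})) to homogeneous ones, I would argue that on the hypersurface $A$, the boundedness on lines through the origin lets one control an arbitrary polynomial restricted to $A$ by its homogenization: roughly, write $q = \sum_j q_j$ in homogeneous components, and use that on $A$, powers $W_0^k$ relate different components — more robustly, one shows $V_{\tilde K}(W)\leq H_{\tilde K'}(t,W)|_{t=1}$ for a suitable associated circled set, exactly as in the unweighted $H$-principle, and the line-boundedness guarantees the associated set $\tilde K'$ is compact/nonpluripolar so the homogeneous extremal function is finite and the comparison is tight.

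A cleaner route for that last step, which I would actually pursue: use the assumed extension $Q\in L(\C^n)$ directly. Given any polynomial $q$ on $\C^{n+1}$ with $\|q\|_{\tilde K}\leq 1$, I want $\frac1{\deg q}\log|q(W)| \leq V_{K,Q}(z) - Q(z)$. Restrict $q$ to $A$ and dehomogenize in the variable $W_0$ to get a function of $z=W'/W_0$; the condition that $A$ is bounded on lines through the origin ensures that the map $W\mapsto (W_0, z)$ is proper enough that $|q(W)|$ is comparable to $|W_0|^{\deg q}$ times a polynomial in $z$ whose weighted sup-norm over $K$ is bounded, and then $Q\in L(\C^n)$ provides the growth control needed to conclude that this polynomial-in-$z$ is admissible for $V_{K,Q}$ up to the degree normalization. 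The main obstacle I anticipate is precisely this reduction from general polynomials to homogeneous ones on the possibly-singular variety $A$: one must be careful that dehomogenizing does not drop degree in a way that breaks the normalization, and that the set of $W\in\tilde K$ lying over a given $z\in K$ genuinely realizes the value $|W_0|=w(z)$ required by (\ref{need}) — this is where the hypothesis that $\tilde K$ (rather than $F(K)$) is used, since $\tilde K$ contains \emph{all} preimages, and where the standing assumption (\ref{need}) on the extended $Q$ does the essential work.
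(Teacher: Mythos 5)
There is a genuine gap, and it sits exactly at the heart of the lemma. Your concrete argument only controls the \emph{homogeneous} extremal function $H_{\tilde K}$, and even that with two slips. First, (\ref{need}) says $Q(z)$ is the \emph{maximum} of $-\log|W_0|$ over the fiber, so $-\log|W_0|\le Q(z)$ gives $|W_0|\ge w(z)$, not $|W_0|\le w(z)$ as you write; the admissibility $w(z)^{d}|p(z)|\le |W_0|^{d}|p(z)|=|h(W)|\le 1$ on $K$ does follow, but from the correct direction. Second, your final step uses $\log|W_0|\le -Q(z)$, which is false at a general point of the fiber (it holds only at the $W$ realizing the max in (\ref{need}), and otherwise only up to the constant $\log c$ coming from line-boundedness), so even the estimate $H_{\tilde K}(W)\le V_{K,Q}(z)-Q(z)$ is not obtained at \emph{all} $W\in A$ over $z$ as the lemma requires. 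The more serious problem is the passage from $H_{\tilde K}$ to $V_{\tilde K}$, which is the actual content of the statement: $V_{\tilde K}$ is defined using all polynomials of $\C^{n+1}$, and a non-homogeneous $q=\sum_j q_j$ restricted to the fiber of $A$ over $z$ is $\sum_j W_0^{\,j}q_j(1,z)$, which is \emph{not} $|W_0|^{\deg q}$ times (anything comparable to) a single polynomial in $z$; so the key claim of your ``cleaner route'' is unjustified and, as stated, false, while the ``suitable associated circled set $\tilde K'$'' of your first route is never constructed. The degree-drop issue you flag ($\deg p<d$ when $w\not\le 1$) is also left unresolved, though it is minor by comparison.

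The paper's proof of the right-hand inequality uses no polynomials at all and this is worth internalizing. It first invokes Sadullaev's Theorem \ref{sadthm} to conclude that, since $\tilde K$ is nonpluripolar in the algebraic hypersurface $A$, $V_{\tilde K}|_A\in L(A)$, i.e. it is locally bounded on $A$ and satisfies $V_{\tilde K}(W)\le\log|W|+C$ there. It then forms the fiberwise envelope $U(z):=\max\{V_{\tilde K}(W): W\in A,\ W'/W_0=z\}+Q(z)$, a locally finite maximum which is psh off the pluripolar image of $A^{sing}$ and hence psh on $\C^n$, and which satisfies $U\le Q$ on $K$ because $V_{\tilde K}=0$ on $\tilde K$. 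The hypotheses ``$A$ bounded on lines through the origin'' and ``$Q$ extends to an element of $L(\C^n)$'' are used only to verify the growth $U(z)\le\tfrac12\log(1+|z|^2)+C+\log c$, i.e. $U\in L(\C^n)$; then $U\le V_{K,Q}$, and since $U$ was a fiberwise maximum, the inequality $V_{\tilde K}(W)\le V_{K,Q}(z)-Q(z)$ holds at every $W$ over $z$ with no stray constant. Note how the constant $\log c$ enters only the growth verification and never the final bound, precisely the point where your pointwise polynomial estimate cannot be made to work; any repair of your approach would need some analogue of this envelope step together with Sadullaev's theorem, which your proposal never invokes.
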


\begin{proof} The left-hand inequality $H_{\tilde K}(W)\leq V_{\tilde K}(W)$ is immediate. For the right-hand inequality, we 
first note that $V_{\tilde K}(W)\in L(A)$ if $\tilde K$ is nonpluripolar in $A$.  Hence there exists a constant $C\in\R$ such that
$$
V_{\tilde K}(W) \leq \log|W| + C
= \log|W_0| + \frac{1}{2}\log(1+|W'/W_0|^2) + C
$$
 for all $W\in A$ with $W_0\not = 0$. 

Define the function
$$
U(z):= \max\{V_{\tilde K}(W): W\in A, W'/W_0=z\} + Q(z).
$$
Note that the right-hand side is a locally finite maximum since $A$ is an algebraic hypersurface.  Away from the singular points $A^{sing}$ of $A$ one can write $V_{\tilde K}(W)$ as a psh function in $z$ by composing it with a local inverse of the map $A\ni W\mapsto z=W'/W_0\in\C^n$. Hence $U$ is psh off the pluripolar set $$\{z\in\C^n: z=W'/W_0 \ \hbox{ for some }  \ W\in A^{sing}\},$$ and hence psh everywhere since it is clearly locally bounded above on $\C^n$.  

Also, since $V_{\tilde K}=0$ on $\tilde K$ it follows that $U\leq Q$ on $K$.  We now verify that $U\in L(\C^n)$ by checking its growth.  By the definitions of $U$ and $Q$ and (\ref{need}), given $z\in\C^n$ there exist $W,V\in A$, with 
$z=W'/W_0= V'/V_0$, such that 
$$
U(z) = V_{\tilde K}(W) + Q(z) \ \hbox{ and } \ Q(z)=-\log|V_0|. 
$$
Note that $W=\alpha V$, and since $A$ is uniformly bounded on lines through the origin, there is a uniform constant $c$ (independent of $W,V$) such that $|\alpha|\leq c$.   We then compute 
\begin{eqnarray*}
 U(z) = V_{\tilde K}(W) - \log|V_0| 
&\leq& V_{\tilde K}(W)  - \log|W_0| + \log c \\ 
&\leq& \log|W| + C - \log|W_0| + \log c \\
&=& \log|W/W_0| + C +\log c =  \tfrac{1}{2}\log(1+|z|^2) + C +\log c
\end{eqnarray*}
where $C>0$ exists since $V_{\tilde K}\in L(A)$.   Hence $U\in L(\C^n)$, and since $U\leq Q$ on $K$ this means that 
$U(z)\leq V_{K,Q}(z)$.  By the definition of $U$, 
$$
V_{\tilde K}(W)+Q(z)\leq V_{K,Q}(z)
$$
for all $W\in A$ such that $W'/W_0=z$, which completes the proof.
\end{proof}

The situation of Lemma \ref{lowerest} will be the setting of our example in the next section.


\section{The weight $w(z)=|\frac{1}{(1+z^2)^{1/2}}|$ and $K=\R^n$} We consider the closed set $K=\R^n\subset \C^n$ and the weight $w(z)=|f(z)|=|\frac{1}{(1+z^2)^{1/2}}|$ where $z^2 =z_1^2 +\cdots + z_n^2$. Note that $f(z)\not =0$ and we may extend $Q(z)=-\log |f(z)|$ to all of $\C^n$ as $Q(z)=\frac{1}{2}\log |1+z^2|\in L(\C^n)$. Since
$$(1+z^2)\cdot f(z)^2 -1 =0,$$
we take 
$$P(z_0, z) = (1+z^2)z_0^2 -1.$$
Here,
 $$A = \{W:P(W_0,W'/W_0)=(1+W'^2/W_0^2)W_0^2-1= W_0^2+W'^2-1=0\}$$
 is the complexified sphere in $\C^{n+1}$. From Definition \ref{three4} and Example \ref{three5}, $A$ is bounded on lines  through the origin. Note that $f$ is clearly holomorphic in a neighborhood of $\R^n$; thus we can take, e.g., $\Omega=\{z= x+iy \in \C^n: y^2=y_1^2 +\cdots + y_n^2 < s <1\}$ in Proposition \ref{exfcn} and Lemma \ref{lowerest} where $z_j=x_j+iy_j$. Condition (\ref{need}) holds for $Q(z)=\frac{1}{2}\log |1+z^2|\in L(\C^n)$ at $z\in \C^n$ for which there exist $W\in A$ with $W'/W_0=z$ since $W=(W_0,W')\in A$ implies $W_0=\pm \sqrt {1-(W')^2}$ so that $|W_0|$ is the same for each choice of $W_0$. We have
 $$F(K)= \{(f(z),zf(z)): z=(z_1,...,z_n) \in K=\R^n\}=\{(\frac{1}{(1+x^2)^{1/2}},\frac{x}{(1+x^2)^{1/2}}):x\in \R^n\}.$$
 Writing $u_j = {\rm Re} W_j$, we see that 
 $$F(K)=\{ (u_0,...,u_n)\in \R^{n+1}: \sum_{j=0}^n u_j^2=1, \ u_0 >0\}.$$ 
 On the other hand,
 $$\tilde K = \{W\in A: W'/W_0\in K\}= \{ (u_0,...,u_n)\in \R^{n+1}: \sum_{j=0}^n u_j^2=1\}.$$ 
Clearly $\tilde K$ is nonpluripolar in $A$ which completes the verification that Lemma \ref{lowerest} is applicable. 
 We also observe that since for any homogeneous polynomial $h=h(W_0,...,W_n)$ we have 
 $$|h(-u_0,u_1,...,u_n)|=|h(u_0,-u_1,...,-u_n)|,$$
 the homogeneous polynomial hulls of $\tilde K$ and $\overline {F(K)}$ in $\C^{n+1}$ coincide so that $H_{\tilde K} = H_{\overline{F(K)}}$ in $A$. Since 
 $$\overline{F(K)}\setminus F(K)=\{ (u_0,...,u_n)\in \R^{n+1}: \sum_{j=0}^n u_j^2=1, \ u_0 =0\}\subset A\cap 
 \{ W_0=0\}$$
 is a pluripolar subset of $A$, 
  \begin{equation}\label{hulleq} H_{\tilde K} =  H_{F(K)}\end{equation}
 on $A\setminus P$ where $P\subset A$ is pluripolar in $A$. Combining (\ref{hulleq}) with Proposition \ref{exfcn} and Lemma \ref{lowerest}, we 
have 
 \begin{equation}\label{fullin} H_{\tilde K} (W)=V_{\tilde K}(W) = V_{K,Q}(z) -Q(z)= H_{F(K)}(W)\end{equation}
 for $z\in \tilde \Omega :=\Omega \setminus \tilde P$ and $W=F(z)$ where $\tilde P$ is pluripolar in $\C^n$.

 To compute the extremal functions in this example, we first consider $V_{\tilde K}$ in $A$. Let 
 $$B:=B_{n+1}=\{(u_0,...,u_n)\in \R^{n+1}: \sum_{j=0}^n u_j^2\leq1\}$$
 be the real $(n+1)-$ball in $\C^{n+1}$. 
 
 \begin{proposition} We have
 $$V_B(W)=V_{\tilde K}(W)$$
 for $W\in A$.
 
 \end{proposition}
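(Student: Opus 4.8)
The plan is to prove the two inequalities $V_B \le V_{\tilde K}$ and $V_{\tilde K} \le V_B$ separately on $A$, using in an essential way that $\tilde K = \partial B \cap \R^{n+1}$ is the real unit sphere sitting inside both the real ball $B = B_{n+1}$ and the complexified sphere $A$. The inequality $V_B \le V_{\tilde K}$ on $A$ should be the easy direction: since $\tilde K \subset B$ we have $V_B \le V_{\tilde K}$ everywhere on $\C^{n+1}$ (more competitors in the sup defining $V_{\tilde K}$, or: any polynomial bounded by $1$ on $B$ is bounded by $1$ on $\tilde K$), and restricting to $A$ gives the claim. Note $V_B$ is not identically $\infty$ — it is the Lundin formula \eqref{eq:realball} — while $V_{\tilde K}$, being the extremal function of a pluripolar subset of $\C^{n+1}$, equals $+\infty$ off $A$; so the content is entirely the comparison \emph{on} $A$.

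For the reverse inequality $V_{\tilde K} \le V_B$ on $A$, I would first invoke Sadullaev's Theorem \ref{sadthm}: $A$ is an algebraic variety and $\tilde K$ is compact and nonpluripolar in $A$, so $V_{\tilde K}|_A$ (more precisely its usc regularization in $A$) lies in $L(A)$ and is maximal, i.e.\ $(dd^c V_{\tilde K}|_A)^n = 0$, on $A^{reg}\setminus \tilde K$. On the other hand $V_B$ restricted to $A$ is a candidate: it is psh on $A$ (restriction of a global psh function), it vanishes on $\tilde K$ (since $\tilde K \subset \partial B$ and $V_B = 0$ on $B$), and it has the right $L(A)$ growth (it has $L(\C^{n+1})$ growth). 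By Theorem \ref{blmthm} applied to the convex body $B = B_{n+1}\subset \R^{n+1}$, through each point of $\C^{n+1}\setminus B$ there is a complex ellipse or complexified real line on which $V_B$ is harmonic off $B$; the point is to argue that enough of these ellipses lie inside $A$, so that $V_B|_A$ is itself maximal on $A\setminus \tilde K$. Then two functions in $L(A)$, both maximal on $A\setminus \tilde K$ and both vanishing on $\tilde K$, agree by the domination principle for the complex Monge--Amp\`ere operator on the variety $A$ (again available from the Sadullaev framework), giving $V_{\tilde K} = V_B$ on $A$.

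The main obstacle — and the step I would spend the most care on — is showing that the foliation of $\C^{n+1}\setminus B_{n+1}$ by the Joukowski-type ellipses $F(\zeta) = a + c\zeta + \bar c/\zeta$ of Theorem \ref{blmthm} restricts to a foliation of $A \setminus \tilde K$, equivalently that for every $W \in A \setminus \tilde K$ the ellipse (or line) through $W$ on which $V_B$ is harmonic is entirely contained in $A$. Concretely, $A = \{W_0^2 + \cdots + W_n^2 = 1\}$, so one needs: if $F(\zeta_0) \in A$ for one $\zeta_0$ then $\sum_j F_j(\zeta)^2 \equiv 1$. Writing $\sum_j F_j(\zeta)^2 = (\sum a_j^2 + 2\sum a_j c_j\,\zeta + \cdots)$ as a Laurent polynomial in $\zeta$ of the form $A_{-2}\zeta^{-2} + A_{-1}\zeta^{-1} + A_0 + A_1\zeta + A_2\zeta^2$ with $A_2 = \sum c_j^2$, $A_{-2} = \sum \bar c_j^2 = \overline{A_2}$, etc., the constraint that the associated real ellipse $E\cap B_{n+1}$ is inscribed in the \emph{sphere} (not merely in some convex body) forces $\sum c_j^2 = 0$ and $\sum |c_j|^2$, $\sum a_j^2$, $\sum a_jc_j$ into precisely the relations making this Laurent polynomial reduce to the constant $1$; this is exactly the computation that, for the unit ball, the great-circle/antipodal-point geometry described after Theorem \ref{blmthm} translates into the defining equation of $A$. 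I would carry this out by parametrizing the inscribed ellipse of $B_{n+1}$ explicitly (it is centered at $0$, its semi-axes are orthogonal unit-proportional vectors), deriving the algebraic identities among $a,c$, and substituting. Once that containment is established, the maximality transfer and the final domination-principle comparison are routine, and \eqref{eq:realball} then also yields the closed formula $V_B(W) = V_{\tilde K}(W) = \tfrac12 \log h(|W|^2 + |W^2 - 1|)$ on $A$, which is what feeds into the subsequent computation of $V_{K,Q}$.
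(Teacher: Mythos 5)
Your overall skeleton (the easy inequality $V_B\le V_{\tilde K}$ from $\tilde K\subset B$, and the hard inequality by exploiting the Theorem \ref{blmthm} leaves of $V_B$ inside $A$) is the same as the paper's, but the step you yourself single out as the main obstacle is where the proposal breaks down, and the missing piece is exactly the argument the paper supplies. The claim ``if $F(\zeta_0)\in A$ for one $\zeta_0$ then $\sum_j F_j(\zeta)^2\equiv 1$'' is false for the extremal leaves of $B_{n+1}$: every extremal ellipse of the ball is centered at the origin and, unless it is a great circle, it touches the unit sphere at exactly two antipodal points $\pm p\in\tilde K\subset A$, so its complexification meets $A$ without being contained in $A$. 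For instance $F(\zeta)=\bigl(\tfrac12(\zeta+1/\zeta),\tfrac{r}{2i}(\zeta-1/\zeta),0,\dots,0\bigr)$ with $0<r<1$ satisfies $\sum_j F_j(\zeta)^2=1$ only at $\zeta=\pm1$. Your proposed Laurent-coefficient computation sidesteps this by assuming the real ellipse is ``inscribed in the sphere,'' i.e.\ that its entire boundary lies on $\tilde K$ --- but that the leaf through a given $W\in A\setminus\tilde K$ is a complexified great circle is precisely what has to be proved. The missing idea is the dichotomy argument: for a leaf $L$ that is not a complexified great circle one computes (after a rotation, the paper's two cases: the degenerate segment, and the ellipse $u_0^2+u_1^2/r^2=1$, $0<r<1$) that $L\cap A=\{\pm p\}\subset\tilde K$, so a point of $A\setminus\tilde K$ can only lie on a complexified great circle; those in turn do lie in $A$ (your computation with $a=0$, $\sum_j c_j^2=0$, $2|c|^2=1$, or the paper's orthogonal-transformation argument). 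Without this exclusion step the containment you need has no proof.

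A secondary point: your endgame is heavier than necessary, and its key tool is not actually justified. Once you know that every $W\in A\setminus\tilde K$ lies on a complexified great circle $L_\alpha$ whose real points lie in $\tilde K$, you do not need maximality of $V_{\tilde K}|_A$ from Theorem \ref{sadthm}, nor a domination/uniqueness principle for the Monge--Amp\`ere operator on the variety $A$ --- a principle you invoke as ``available from the Sadullaev framework'' but which Theorem \ref{sadthm} does not provide and which would itself require proof. The paper instead compares the two functions leafwise: on $L_\alpha$, $V_B(F(\zeta))=\log|\zeta|$ is harmonic off $S_\alpha$, while each competitor $\frac{1}{\deg p}\log|p(F(\zeta))|$ for $V_{\tilde K}$ is subharmonic in $\zeta$, nonpositive on $|\zeta|=1$ (because $S_\alpha\subset\tilde K$) and of logarithmic growth, so the one-variable maximum principle yields $V_{\tilde K}\le V_B$ on $L_\alpha$; together with the easy inequality this finishes the proof without any Monge--Amp\`ere uniqueness on $A$. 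If you repair the geometric step as above, I would also replace your maximality-plus-domination scheme by this leafwise comparison.
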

 
 \begin{proof} Clearly $V_B|_A \leq V_{\tilde K}$. To show equality holds, the idea is that if we consider the complexified extremal ellipses $L_{\alpha}$ as in Theorem \ref{blmthm} for $B$ whose real points $S_{\alpha}$ are great circles on $\tilde K$, the boundary of $B$ in $\R^{n+1}$, then the union of these varieties fill out $A$: $\cup_{\alpha} L_{\alpha} =A$. Since $V_B|_{L_{\alpha}}$ is {\it harmonic}, we must have $V_B|_{L_{\alpha}} \geq  V_{\tilde K}|_{L_{\alpha}}$ so that $V_B|_A = V_{\tilde K}$. 
 
 To see that $\cup_{\alpha} L_{\alpha} =A$, we first show $A\subset \cup_{\alpha} L_{\alpha}$. If $W\in A\setminus \tilde K$, then $W$ lies on {\it some} complexified extremal ellipse $L$ whose real points $E$ are an inscribed ellipse in $B$ with boundary in $\tilde K$ (and $V_B|_L$ is harmonic). If $L\not = L_{\alpha}$ for some $\alpha$, then $E\cap \tilde K$ consists of two antipodal points $\pm p$. By rotating coordinates we may assume $\pm p = (\pm 1,0,...,0)$ and 
 $$E\subset \{(u_0,...,u_n):  \ u_2=\cdots =u_n=0\}.$$
We have two cases: 
 \begin{enumerate}
 \item $E=\{(u_0,...,u_n): |u_0|\leq 1, \ u_1=0, \ u_2=\cdots =u_n=0\}$, a real interval:
 
 \noindent In this case
 $$L=\{(W_0,0,...,0): W_0\in \C \}.$$
 But then $L\cap A =\{(W_0,0,...,0): W_0=\pm 1 \}=\{\pm p\} \subset \tilde K$, contradicting $W\in A\setminus \tilde K$.

 \item $E=\{(u_0,...,u_n): u_0^2+u_1^2/r^2=1, \ u_2=\cdots =u_n=0\}$ where $0<r<1$, a nondegenerate ellipse:
 
 \noindent In this case, 
 $$L:=\{(W_0,...,W_n): W_0^2+W_1^2/r^2=1, \ W_2=\cdots =W_n=0\}.$$
 But then if $W\in L\cap A$ we have
 $$ W_0^2+W_1^2/r^2=1= W_0^2+W_1^2$$
 so that $W_1=\cdots =W_n=0$ and $W_0^2=1$; i.e., $L\cap A =\{\pm p\} \subset \tilde K$ which again contradicts $W\in A\setminus \tilde K$.

 \end{enumerate}
 
 For the reverse inclusion, recall that the variety $A$ is defined by $\sum_{j=0}^nW_j^2=1$. If $W=u+iv$ with $u,v\in \R^{n+1}$, we have
  $$\sum_{j=0}^nW_j^2 = \sum_{j=0}^n[u_j^2-v_j^2]+ 2i\sum_{j=0}^nu_jv_j.$$ 
  Thus for $W=u+iv \in A$, we have 
  $$\sum_{j=0}^nu_jv_j=0.$$
   If we take an orthogonal transformation $T$ on $\R^{n+1}$, then, by definition, $T$ preserves Euclidean lengths in $\R^{n+1}$; i.e., $\sum_{j=0}^n u_j^2=1=\sum_{j=0}^n (T(u)_j)^2=1$. Moreover, if $u,v$ are orthogonal; i.e., $\sum_{j=0}^nu_jv_j=0$, then $\sum_{j=0}^n(T(u))_j\cdot (T(v))_j =0$. Extending $T$ to a complex-linear map on $\C^{n+1}$ via 
  $$T(W)=T(u+iv):= T(u) +iT(v),$$
  we see that if $W\in A$, then $\sum_{j=0}^n(T(u))_j\cdot (T(v))_j =0$ so that
  $$\sum_{j=0}^n(T(W)_j)^2 = \sum_{j=0}^n[(T(u)_j)^2-(T(v)_j)^2]=\sum_{j=0}^n[u_j^2-v_j^2]=1.$$
  Thus $T$ preserves $A$.
  
  Clearly the ellipse 
  $$L_0:=\{(W_0,...,W_n): W_0^2+W_1^2=1, \ W_2=\cdots =W_n=0\}$$
 corresponding to the great circle 
 $S_0:=\{(u_0,...,u_n): u_0^2+u_1^2=1, \ u_2=\cdots =u_n=0\}$ lies in $A$ and any other great circle $S_{\alpha}$ can be mapped to $S_0$ via an orthogonal transformation $T_{\alpha}$. From the previous paragraph, we conclude that $ \cup_{\alpha} L_{\alpha}\subset A$.

 \end{proof}
 
 \noindent We use the Lundin formula for $V_B$ in (\ref{eq:realball}):
 $$V_B(W)=\frac{1}{2}\log h\bigl( |W|^2+|W^2-1|\bigr)$$ 
 where $h(t)=t+\sqrt{t^2-1}$ for $t\in \C\setminus [-1,1]$. Now the formula for $V_{\tilde K}$ can only be valid on $A$; and indeed, since $W^2=1$ on $A$, by the previous proposition we obtain
 $$V_{\tilde K}(W)=\frac{1}{2}\log h (|W|^2), \ W\in A.$$
Note that since the real sphere $\tilde K$ and the complexified sphere $A$ are invariant under real rotations, the Monge-Amp\`ere measure 
$$(dd^cV_{\tilde K} (W))^n=(dd^c \frac{1}{2}\log h( |W|^2))^n$$ 
must be invariant under real rotations as well and hence is normalized surface area measure on the real sphere $\tilde K$. This can also be seen as a consequence of $V_{\tilde K}$ being the {\it Grauert tube function} for $\tilde K$ in $A$ as $(dd^cV_{\tilde K} (W))^n$ gives the volume form $dV_g$ on $\tilde K$ corresponding to the standard Riemannian metric $g$ there (cf., \cite{Z}).

 Getting back to the calculation of $V_{K,Q}$, note that since $W=(\frac{1}{(1+z^2)^{1/2}},\frac{z}{(1+z^2)^{1/2}})$,
 $$|W|^2:=|W_0|^2+|W_1|^2+\cdots +|W_n|^2=\frac{1+|z|^2}{|1+z^2|}.$$
 Plugging in to (\ref{fullin})
 $$V_{\tilde K}(W)= V_B(W) =V_{K,Q}(z)-Q(z)=V_{K,Q}(z)- \frac{1}{2}\log |1+z^2|$$
 gives 
\begin{equation}\label{magic}V_{K,Q}(z)=\frac{1}{2}\log \bigl( [1+|z|^2] + \{ [1+|z|^2]^2-|1+z^2|^2\}^{1/2}\bigr)\end{equation}
for $z\in \tilde \Omega$. We show in section 5 that this formula does indeed give us the extremal function $V_{K,Q}(z)$ for all $z\in \C^n$. 
 
 A similar observation leads to another derivation of the above formula. Consider $\overline {F(K)}$ as the upper hemisphere 
 $$S:=\{(u_0,...,u_n)\in \R^{n+1}: \sum_{j=0}^n u_j^2 =1, \ u_0 \geq 0\}$$
 in $\R^{n+1}$ and let $\pi: \R^{n+1}\to \R^n$ be the projection $\pi(u_0,...,u_n)=(u_1,...,u_n)$ which we extend to $\pi: \C^{n+1}\to \C^n$ via $\pi(W_0,...,W_n)=(W_1,...,W_n)$. Then 
 $$\pi(S) = B_n:= \{(u_1,...,u_n)\in \R^{n}: \sum_{j=1}^n u_j^2\leq1\}$$
 is the real $n-$ball in $\C^{n}$. Each great semicircle $C_{\alpha}$ in $S$ -- these are simply half of the $L_{\alpha}$'s from before -- projects to half of an inscribed ellipse $E_{\alpha}$ in $B_n$, while the other half of $E_{\alpha}$ is the projection of the great semicircle given by the negative $u_1,...,u_n$ coordinates of $C_{\alpha}$ (still in $F(K)$, i.e., with $u_0>0$).  As before, the complexification $E^*_{\alpha}$ of the ellipses $E_{\alpha}$ correspond to complexifications of the great circles.

 \begin{proposition} \label{semi} We have
 $$H_{F(K)}(W_0,...,W_n)=V_{B_n}(\pi(W))=V_{B_n}(W_1,...,W_n)=V_{B_n}(W')\leq V_{\tilde K}(W_0,...,W_n)$$
 for $W=(W_0,...,W_n)=(W_0,W')\in A$.
 \end{proposition}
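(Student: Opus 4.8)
The plan is to mirror the structure of the proof of the previous proposition, replacing the role of the full sphere $\tilde K$ (and the full ball $B_{n+1}$) by the hemisphere $S$ (and the ball $B_n$), using the projection $\pi$ to transport the foliation of $\C^n\setminus B_n$ by complex extremal ellipses up to $A$. First I would establish the inequality $H_{F(K)}(W)\le V_{B_n}(\pi(W))$: given a homogeneous polynomial $h$ with $\|h\|_{F(K)}\le 1$, dehomogenize it by setting $p(W'):=h(1,W')$ (equivalently $h(W_0,W')=W_0^{\deg h}p(W'/W_0)$ via the $H$-principle recalled in section 2), and observe that $\|p\|_{B_n}\le 1$ because $\pi(S)=B_n$ and on $S$ one has $W_0=\sqrt{1-|W'|^2}$ is a bounded factor; more precisely, for $W\in F(K)$ with $W_0>0$, $|h(W)|=|W_0^{\deg h}||p(W'/W_0)|$, and as $W$ ranges over $F(K)$ the point $W'/W_0$ ranges over $\R^n$... so this needs the homogeneous-hull reformulation instead. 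The cleaner route: recall from section 2 that $H_{F(K)}=V_{(F(K))_c}$ on the homogeneous polynomial hull, and that $(F(K))_c$'s hull is $\{tW:W\in F(K),|t|\le1\}$; projecting through $\pi$ identifies this with the circled hull of $B_n$, giving $H_{F(K)}(W)=V_{B_n}(W')$ directly once one checks $\pi$ maps the relevant circled set onto $\{tx:x\in B_n,|t|\le1\}=\widehat{(B_n)_c}$.

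For the reverse inequality $V_{B_n}(W')\le H_{F(K)}(W)$, I would use the foliation. By Theorem \ref{blmthm} applied to the convex body $B_n\subset\R^n$, every point of $\C^n\setminus B_n$ lies on a complexified extremal ellipse (or line) $E^*_\alpha$ on which $V_{B_n}$ is harmonic and equals $\log^+|\zeta|$ in the Joukowski parameter. The key geometric observation, already flagged in the paragraph preceding the statement, is that each such $E^*_\alpha$ lifts: the real ellipse $E_\alpha$ inscribed in $B_n$ is the $\pi$-image of a great circle $S_\alpha\subset\tilde K$ whose ``upper'' and ``lower'' halves (with $u_0>0$) both project onto $E_\alpha$, and the complexification $L_\alpha\subset A$ of $S_\alpha$ projects holomorphically onto $E^*_\alpha$. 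Then on $L_\alpha$ the pullback $\pi^*V_{B_n}$ is harmonic (composition of a harmonic function with a holomorphic map), vanishes on $S_\alpha\subset\overline{F(K)}$, and has the right logarithmic growth, so it is a competitor showing $\pi^*V_{B_n}\le H_{F(K)}$ on $L_\alpha$; since the $L_\alpha$ sweep out $A$ (by the argument in the previous proposition, or by projecting the known covering of $\C^n\setminus B_n$), this gives $V_{B_n}(W')\le H_{F(K)}(W)$ everywhere on $A$. Finally, the inequality $H_{F(K)}(W)\le V_{\tilde K}(W)$ is just Proposition \ref{exfcn} combined with $F(K)\subset\tilde K$ (monotonicity of $H$ in the set), or directly from $H_{F(K)}=H_{\tilde K}=V_{\tilde K}$ via (\ref{hulleq})–(\ref{fullin}).

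The step I expect to be the main obstacle is making the lift of the foliation rigorous: I must verify that for each extremal ellipse $E^*_\alpha$ of $B_n$ there is genuinely a component of $\pi^{-1}(E^*_\alpha)\cap A$ that is an irreducible algebraic curve mapping properly (hence finite-to-one, in fact one-to-one onto each sheet) onto $E^*_\alpha$, and that $V_{B_n}$ harmonic on $E^*_\alpha\setminus B_n$ pulls back to a function harmonic on $L_\alpha\setminus S_\alpha$ with no hidden singularities at the two ``equator'' points where $W_0=0$. This is handled exactly as in the previous proposition's case analysis: one rotates coordinates so $E_\alpha$ lies in the $(u_0,u_1)$-plane (after identifying it as a planar ellipse via $\pi$), writes $L_\alpha$ explicitly as the conic $\{W_0^2+W_1^2=1,\ (W_1,\dots)\text{ on }E^*_\alpha\}$, and checks the parametrization by a Joukowski variable matches on both the $B_n$ side and the $A$ side; the boundedness of $W_0$ on $S_\alpha$ (Definition \ref{three4}, since $A$ is bounded on lines through the origin) controls the growth comparison, so no extra analytic input is needed beyond Theorems \ref{blmthm} and \ref{sadthm}.
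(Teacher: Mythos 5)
There are genuine gaps in both halves of your argument, and the paper's proof is considerably more direct. For the inequality $H_{F(K)}(W)\leq V_{B_n}(W')$, your ``cleaner route'' through circled hulls cannot work as stated: the homogeneous hull of $\overline{F(K)}$ (the closed upper hemisphere) equals that of the full sphere $\tilde K$, namely the Lie ball $L_{n+1}$, and its image under $\pi$ is $L_n=\hat{(B_n)}_{hom}$; identifying extremal functions across the projection would therefore at best produce $V_{L_n}(W')=H_{B_n}(W')$, which the paper explicitly warns is \emph{not} $V_{B_n}(W')$ (end of section 2: $V_{B_n}\neq V_{L_n}$). Moreover extremal functions do not simply push forward through projections. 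The identity $H_{F(K)}(W)=V_{B_n}(W')$ is special to $W\in A$, and the paper's mechanism for the $\leq$ direction is to \emph{descend} $H_{F(K)}$ through the two-to-one map $A\ni W=(W_0,W')\mapsto W'$: since $H_{F(K)}(W_0,W')=H_{F(K)}(-W_0,W')$, the assignment $W'\mapsto H_{F(K)}(\pi^{-1}(W'))$ is a well-defined function of $W'$ lying in $L(\C^n)$ and nonpositive on $B_n$ (because $\pi^{-1}(B_n)\cap A=\tilde K$), hence a competitor for $V_{B_n}$.

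For the reverse inequality your foliation-lifting argument runs the comparison in the wrong direction. A harmonic function on a single leaf $L_\alpha$, or the globally defined but \emph{not} logarithmically homogeneous function $W\mapsto V_{B_n}(W')$, is not a competitor in the class $H$ defining $H_{F(K)}$, so it cannot give a lower bound for $H_{F(K)}$; and the leaf-by-leaf maximum-principle comparison (subharmonic restriction of $H_{F(K)}$ versus harmonic $\pi^*V_{B_n}$ with matching boundary values and growth) yields $H_{F(K)}\leq \pi^* V_{B_n}$ once more, not $\pi^*V_{B_n}\leq H_{F(K)}$. In the paper the equality asserted in the proposition is obtained externally, from the sandwich (\ref{fullin}) $H_{\tilde K}=V_{\tilde K}=V_{K,Q}-Q=H_{F(K)}$, which you mention only as a parenthetical alternative; the proof of the proposition itself establishes only the two inequalities $H_{F(K)}\leq V_{B_n}\circ\pi\leq V_{\tilde K}$. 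Finally, the monotonicity you invoke for $H_{F(K)}\leq V_{\tilde K}$ points the wrong way: $F(K)\subset\tilde K$ gives $H_{\tilde K}\leq H_{F(K)}$; what is actually used is $H_{F(K)}=H_{\tilde K}$ from (\ref{hulleq}) together with $H_{\tilde K}\leq V_{\tilde K}$.
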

 
 \begin{proof} Clearly $V_{B_n}(\pi(W))\leq V_{\tilde K}(W)$. For the inequality $H_{F(K)}(W)\leq V_{B_n}(\pi(W))$, note that for $W\in A$ with $W=(W_0,W')$, we have $\pi^{-1}(W')=(\pm W_0,W')\in A$ but the value of $H_{F(K)}$ is the same at both of these points. Thus $W'\to H_{F(K)}(\pi^{-1}(W'))$ is a well-defined function of $W'$ for $W\in A$ which is clearly in $L(\C^n)$ (in the $W'$ variables) and nonpositive if $W'\in B_n$; hence $H_{F(K)}(\pi^{-1}(W'))\leq V_{B_n}(W')$.
 
 \end{proof}
 
 From (\ref{fullin}), 
 $$ H_{\tilde K} (W)=V_{\tilde K}(W) = V_{K,Q}(z) -Q(z)= H_{F(K)}(W)$$
for $z\in \tilde \Omega$ and $W=F(z)$ so that we have equality for such $W$ in Proposition \ref{semi} and an alternate way of computing $V_{K,Q}$. From the Lundin formula, for $(W_0,W')\in A$ we have $W_0^2+W'^2=1$ so
 $$V_{B_n}(W')=\frac{1}{2}\log h\bigl( |W'|^2+|W'^2-1|\bigr)=\frac{1}{2}\log h( |W|^2).$$
and we get the same formula (\ref{magic})
 \[V_{K,Q}(z)=\frac{1}{2}\log \bigl( [1+|z|^2] + \{ [1+|z|^2]^2-|1+z^2|^2\}^{1/2}\bigr)\]
 for $z\in \tilde \Omega$. 
 
 \begin{remark}\label{onevarem} Note that for $n=1$, it is easy to see that
\begin{equation}\label{onevar} V_{K,Q}(z)=\max [\log |z-i|, \log |z+i|]\end{equation}
which agrees with formula (\ref{magic}).
 
 \end{remark}

\section{Relation with Lie ball and maximality of $V_{K,Q}$} One way of describing the Lie ball $L_n\subset \C^n$ is that it is the  homogeneous polynomiall hull $\hat{(B_n)}_{hom}$ of the real ball 
$$B_n:=\{x=(x_1,...,x_n)\in \R^n: x^2 = x_1^2+\cdots +x_n^2\leq 1\}.$$
A formula for $L_n$ is given by
$$L_n =\{z=(z_1,...,z_n)\in \C^n: |z|^2 +\{ |z|^4 - |z^2|^2\}^{1/2} \leq 1\}.$$
Note that (by definition) $L_n$ is circled. Writing $Z:=(z_0,z)=(z_0,z_1,...,z_n)\in \C^{n+1}$, 
$$L_{n+1}=\{Z\in \C^{n+1}: |Z|^2 +\{ |Z|^4 - |Z^2|^2\}^{1/2} \leq 1\}.$$
The (homogeneous) Siciak-Zaharjuta extremal function of this (circled) set is 
$$H_{L_{n+1}}(Z)=V_{L_{n+1}}(Z)= \frac{1}{2}\log^+ \bigl(|Z|^2 +\{ |Z|^4 - |Z^2|^2\}^{1/2}\bigr).$$
Thus
$$V_{L_{n+1}}(1,z)= \frac{1}{2}\log \bigl([1+|z|^2] +\{ [1+|z|^2]^2 - |1+z^2|^2\}^{1/2}\bigr)$$
so that from (\ref{magic})
$$V_{K,Q}(z)= V_{L_{n+1}}(1,z)$$
for $z\in \tilde \Omega$.

The extremal function $V_{L_{n+1}}(Z)$ for the Lie ball in $\C^{n+1}$ is maximal outside $L_{n+1}$ and, since 
$$V_{L_{n+1}}(\lambda Z)= \log |\lambda| + V_{L_{n+1}}(Z)$$
for $Z\in \partial L_{n+1}$ and $\lambda \in \C$ with $|\lambda|>1$, we see that $V_{L_{n+1}}$ is harmonic 
on complex lines through the origin (in the complement of $L_{n+1}$). Thus for each $Z\not \in L_{n+1}$, the vector $Z$ is an eigenvector of the complex Hessian of $V_{L_{n+1}}$ at $Z$ with eigenvalue $0$. We will use this to show: {\sl for $z\not \in \R^n$, the vector ${\rm Im} z$ is an eigenvector of the complex Hessian of the function $V_{K,Q}(z)$ defined in (\ref{magic}) at $z$ with eigenvalue $0$}.

To this end, let $u:\C^n\to\R$ denote our candidate function for $V_{K,Q}$ where $K=\R^n\subset \C^n$ and the weight $w(z)=|f(z)|=|\frac{1}{(1+z^2)^{1/2}}|$, i.e., for $z\in \C^n$, define
\[u(z):=\frac{1}{2}\log \bigl( [1+|z|^2] + \{ [1+|z|^2]^2-|1+z^2|^2\}^{1/2}\bigr).\]
Let $U:\C^{n+1}\to \R$ denote its homogenization, i.e,
\[U(Z)=\frac{1}{2}\log \bigl(|Z|^2+ \{|Z|^4-|Z^2|^2\}\bigr)\]
with $Z:=(z_0,z)\in \C^{n+1},$ so that $u(z)=U(1,z)$. From above, $\max[0,U(Z)]$ is the extremal function for the Lie ball $L_{n+1}$, and since $U(Z)$ is psh, so is $u(z)$. Also, $U$ is symmetric as a function of its arguments and has the property that $U(\overline{Z})=U(Z)$; in particular it follows that
\[\frac{\partial^2U}{\partial Z_j\partial \overline{Z}_k}(\overline{Z})=
\frac{\partial^2U}{\partial Z_j\partial \overline{Z}_k}(Z).\]

Now, for any function $v$, let $H_v(z)$ denote the complex Hessian of $v$ evaluated at the point $z.$ For any fixed $Z\in\C^{n+1}$ and $\lambda\in\C,$
\[U(\lambda Z)=U(Z)+\log|\lambda|,\]
which is harmonic as a function of $\lambda$ for $\lambda \not = 0$. It follows that
\begin{equation} \label{atz}
H_U(Z)Z=0\in \C^{n+1},\quad \forall Z\in \C^{n+1}\setminus \{0\}
\end{equation}
and that
\begin{equation} \label{atzbar}
H_U(Z)\overline{Z}=H_U(\overline{Z})\overline{Z}= 0\in \C^{n+1},\quad \forall Z\in \C^{n+1}\setminus \{0\}.
\end{equation}

Equivalently, equation \eqref{atz} says that, for $0\le j\le n,$
\[\sum_{k=0}^{n} \frac{\partial^2U}{\partial Z_j\partial \overline{Z}_k}(Z)
\times Z_k=0.\]
But then, for $1\le j\le n,$ we have
\[\sum_{k=1}^{n} \frac{\partial^2U}{\partial Z_j\partial \overline{Z}_k}(Z)
\times Z_k=-\frac{\partial^2U}{\partial Z_j\partial \overline{Z}_{0}}(Z)\times Z_{0}.\]
Evaluating at $Z=(1,z)$ we obtain
\[\sum_{k=1}^{n} \frac{\partial^2U}{\partial Z_j\partial \overline{Z}_k}(1,z)
\times z_k=-\frac{\partial^2U}{\partial Z_j\partial \overline{Z}_{0}}(1,z)\times 1,\]
i.e.,
\[\sum_{k=1}^{n} \frac{\partial^2u}{\partial z_j\partial \overline{z}_k}(z)
\times z_k=-\frac{\partial^2U}{\partial Z_j\partial \overline{Z}_{0}}(1,z).\]
Similarly, from \eqref{atzbar} we obtain, for $1\le j\le n,$
\[\sum_{k=1}^{n} \frac{\partial^2U}{\partial Z_j\partial \overline{Z}_k}(Z)
\times \overline{Z}_k=-\frac{\partial^2U}{\partial Z_j\partial \overline{Z}_{0}}(Z)\times \overline{Z}_{0}\]
so that evaluating at $Z=(1,z)$ gives
\[\sum_{k=1}^{n} \frac{\partial^2u}{\partial z_j\partial \overline{z}_k}(z)
\times \overline{z}_k=-\frac{\partial^2U}{\partial Z_j\partial \overline{Z}_{0}}(1,z).\]
Consequently, 
\[H_u(z)z=H_u(z)\overline{z}, \,\,\hbox{i.e.,}\,\, H_u(z)(z-\overline{z})=0.\]
In particular, for $z\neq \overline{z},$ i.e., $z\notin \R^n,$ ${\rm det}(H_u(z))=0,$ i.e., $(dd^c u)^n=0$ (note as $u$ is psh, $H_u(z)$ is a positive semi-definite matrix).

Since the function $u$ is maximal on $\C^n\setminus \R^n$ and $u(x)= Q(x)=\frac{1}{2}\log (1+x^2)$ for $x\in \R^n$ we have proved the following:

\begin{theorem} \label{magic1} For $K=\R^n\subset \C^n$ and weight $w(z)=|f(z)|=|\frac{1}{(1+z^2)^{1/2}}|$, 
$$V_{K,Q}(z)=\frac{1}{2}\log \bigl( [1+|z|^2] + \{ [1+|z|^2]^2-|1+z^2|^2\}^{1/2}\bigr), \ z\in \C^n.$$

\end{theorem}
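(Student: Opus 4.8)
The plan is to take the explicit right-hand side as a candidate function on all of $\C^n$,
\[
u(z):=\tfrac{1}{2}\log\bigl([1+|z|^2]+\{[1+|z|^2]^2-|1+z^2|^2\}^{1/2}\bigr),
\]
and to check the three properties that pin down the weighted extremal function: (i) $u\in L(\C^n)$ and $u$ is continuous; (ii) $u=Q$ on $K=\R^n$; and (iii) $u$ is maximal on $\C^n\setminus\R^n$. For (i), $u(z)=U(1,z)$ is the restriction to an affine slice of the homogenization $U$, which (as noted above) is psh on $\C^{n+1}$ since its non-negative part is the extremal function of the Lie ball $L_{n+1}$; hence $u$ is psh on $\C^n$. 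From the elementary bound $|1+z^2|\le 1+|z^2|\le 1+|z|^2$ one gets $\tfrac{1}{2}\log(1+|z|^2)\le u(z)\le\tfrac{1}{2}\log\bigl(2(1+|z|^2)\bigr)$, so the quantity under the square root is non-negative, $u$ is continuous, and $u(z)-\log|z|=O(1)$; thus $u\in L(\C^n)$. For (ii), if $x\in\R^n$ then $z^2=x^2=|x|^2\ge 0$, so $|1+x^2|=1+|x|^2$, the square-root term vanishes, and $u(x)=\tfrac{1}{2}\log(1+|x|^2)=Q(x)$.

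For (iii) I would invoke the Hessian identity obtained above, $H_u(z)(z-\overline z)=0$ for every $z$: when $z\notin\R^n$ the vector $z-\overline z=2i\,\Im z$ is a nonzero element of the kernel of the positive semidefinite matrix $H_u(z)$, so $\det H_u(z)=0$, i.e.\ $(dd^cu)^n=0$ on $\C^n\setminus\R^n$; equivalently $(dd^cu)^n$ is carried by $\R^n$. To conclude, recall that $V_{K,Q}^*\in L(\C^n)$, that $(dd^cV_{K,Q}^*)^n$ is carried by $K=\R^n$, and that $V_{K,Q}^*\le Q$ quasi-everywhere on $K$. Since $u\in L(\C^n)$ with $u\le Q$ on $K$, $u$ is a competitor in the envelope defining $V_{K,Q}^*$, whence $u\le V_{K,Q}^*$; conversely $V_{K,Q}^*\le Q=u$ quasi-everywhere on $\supp(dd^cu)^n\subset\R^n$, so the domination principle for the complex Monge-Amp\`ere operator in the Lelong class (cf.\ \cite{GZ}) gives $V_{K,Q}^*\le u$ on all of $\C^n$. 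Hence $u=V_{K,Q}^*$, and since $u$ is continuous the result of \cite{bs} yields $V_{K,Q}=V_{K,Q}^*=u$. (Alternatively one may bypass the domination principle by combining (iii) with the identity $V_{K,Q}=u$ near $\R^n$ coming from formula (\ref{magic}) and uniqueness for the corresponding equilibrium problem.)

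The step I expect to demand the most care is the passage from the pointwise vanishing of $\det H_u$ to the global maximality statement, together with the application of the domination/uniqueness principle. The Hessian computation is literally valid only where $u$ is $C^2$, so one must verify that the possibly exceptional locus $\{[1+|z|^2]^2=|1+z^2|^2\}\setminus\R^n$, on which $u$ need not be twice differentiable, carries no Monge-Amp\`ere mass; this follows either because that locus is pluripolar or via a standard smoothing argument and continuity of $(dd^c\cdot)^n$ along decreasing sequences of psh functions. One must also confirm that the cited form of the domination principle applies in this unbounded, weighted setting to the pair $(V_{K,Q}^*,u)$, using that $u$ is continuous and $(dd^cu)^n$ is concentrated on $\R^n$. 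Everything else reduces to elementary algebra with the explicit formula.
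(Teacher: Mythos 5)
Your proof is correct and takes essentially the paper's own route: the candidate $u$ is psh via the Lie-ball homogenization, equals $Q$ on $\R^n$, and the Hessian kernel identity $H_u(z)(z-\bar z)=0$ gives $(dd^cu)^n=0$ off $\R^n$, after which the envelope/domination-principle bookkeeping you spell out is precisely what the paper leaves implicit in its one-line conclusion. The only remark worth adding is that your worry about an exceptional non-smooth locus is vacuous: writing $z=x+iy$ one has $[1+|z|^2]^2-|1+z^2|^2=4[y^2+x^2y^2-(x\cdot y)^2]$, which vanishes exactly on $\R^n$, so $u$ is real-analytic on $\C^n\setminus\R^n$ and the pointwise Hessian computation requires no smoothing or pluripolarity argument.
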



Note that from (\ref{wtdrel}), since the K\"ahler potential $u_0(x)=Q(x)$ for $x\in K=\R^n$, 
$$V_{K,Q}(z)=  u_0(z)+ v_{K,0}([1:z]).$$
Thus we have found a formula for the (unweighted) extremal function of $\R \PP^n$, the real points of $\PP^n$.

\begin{corollary} \label{magiccor}The unweighted $\omega-$psh extremal function of $\R \PP^n$ is given by  
$$v_{\R \PP^n,0}([1:z])=\frac{1}{2}\log \bigl( [1+|z|^2] + \{ [1+|z|^2]^2-|1+z^2|^2\}^{1/2}\bigr)-u_0(z)$$
\begin{equation}\label{extrwt} =\frac{1}{2}\log \bigl( 1+[1-\frac{|1+z^2|^2}{(1+|z|^2)^2}]^{1/2}\bigr)\end{equation}
for $[1:z]\in \C^n$ and
\begin{equation}\label{extrwt2}v_{\R \PP^n,0}([0:z])=\frac{1}{2}\log \bigl( 1+[1-\frac{|z^2|^2}{(|z|^2)^2}]^{1/2}\bigr).\end{equation}

\end{corollary}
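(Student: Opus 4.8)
The plan is to read both formulas off Theorem \ref{magic1} via the dictionary between $L(\C^n)$ and $PSH(X,\omega)$ recalled in the introduction. First I would treat the affine chart $[1:z]\in\C^n$. On $K=\R^n$ the weight potential $Q(x)=\tfrac12\log(1+x^2)$ agrees with the K\"ahler potential $u_0(x)=\tfrac12\log(1+|x|^2)$, and $V_{K,Q}$ depends on $Q$ only through $Q|_K$; moreover $\R\PP^n\setminus\R^n$ is contained in the complex hyperplane at infinity of $\PP^n$, which is pluripolar and hence negligible for the extremal function, so one may as well use $K=\R^n$. Thus (\ref{wtdrel}) with $q=0$ gives
$$v_{\R\PP^n,0}([1:z])=V_{K,Q}(z)-u_0(z)\qquad\text{for }[1:z]\in\C^n.$$
Substituting the explicit formula of Theorem \ref{magic1} and dividing the argument of the logarithm by $1+|z|^2$ turns the right-hand side into (\ref{extrwt}); the quantity under the inner square root is nonnegative because $|1+z^2|\le 1+|z^2|\le 1+|z|^2$. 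In particular $0\le v_{\R\PP^n,0}([1:z])\le\tfrac12\log 2$ on $\C^n$, so $V_{K,Q}\in L^+(\C^n)$ and the correspondence $PSH(X,\omega)\approx L(\C^n)$ applies to it verbatim.

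For the hyperplane at infinity I would use the second half of that correspondence: with $u=V_{K,Q}\in L(\C^n)$, the associated $\omega$-psh function $\phi=v_{\R\PP^n,0}$ satisfies, for $z\neq 0$,
$$v_{\R\PP^n,0}([0:z])=\limsup_{|t|\to\infty,\,t\in\C}\Bigl[V_{K,Q}(tz)-\frac{1}{2}\log(1+|tz|^2)\Bigr].$$
Since $|tz|^2=|t|^2|z|^2$ and $(tz)^2=t^2z^2$, the bracketed expression equals, by the affine computation with $z$ replaced by $tz$, the quantity $\tfrac{1}{2}\log\bigl(1+[1-\tfrac{|1+t^2z^2|^2}{(1+|t|^2|z|^2)^2}]^{1/2}\bigr)$. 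As $|t|\to\infty$ the numerator $|1+t^2z^2|^2$ is asymptotic to $|t|^4|z^2|^2$ and the denominator $(1+|t|^2|z|^2)^2$ to $|t|^4(|z|^2)^2$, so the ratio tends to $|z^2|^2/(|z|^2)^2$, a limit independent of $\arg t$. Hence the $\limsup$ is a genuine limit and equals (\ref{extrwt2}).

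There is no deep obstacle here; the corollary is bookkeeping on top of Theorem \ref{magic1}. The one point that deserves care is the passage to $[0:z]$: one must verify that the defining $\limsup$ over all ways $t\to\infty$ is attained as a limit and does not depend on the direction of approach, which is exactly the observation above that the top-degree terms in $|t|$ of $|1+t^2z^2|^2$ and of $(1+|t|^2|z|^2)^2$ dominate. It is also worth remarking that the function described by (\ref{extrwt})--(\ref{extrwt2}) is continuous on all of $\PP^n$ (it is bounded and continuous on $\C^n$, and (\ref{extrwt}) converges to (\ref{extrwt2}) along every ray to infinity), so that these expressions give $v_{\R\PP^n,0}$ itself and not merely its upper semicontinuous regularization; one then checks as a sanity test that both vanish on $\R\PP^n$, as an extremal function must on its defining set.
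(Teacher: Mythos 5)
Your argument is correct and follows the same route as the paper: the corollary is read off by combining the dictionary (\ref{wtdrel}) (with $q=0$, using that $Q=u_0$ on $\R^n$) with the explicit formula of Theorem \ref{magic1}. You also supply two details the paper leaves implicit --- the identification of $v_{\R^n,0}$ with $v_{\R\PP^n,0}$ and the verification that the $\limsup$ defining the value at the hyperplane at infinity is a direction-independent limit yielding (\ref{extrwt2}) --- and both check out.
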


Since $|1+z^2|\leq 1+|z|^2$ (and $|z^2|\leq |z|^2$), we see that, e.g., upon taking $z=i(1/\sqrt n,...,1/\sqrt n)$ in (\ref{extrwt}) or letting $z\to 0$ in (\ref{extrwt2}), 
$$\sup_{{\bf z}\in \PP^n}v_{\R \PP^n,0}({\bf z})= \frac{1}{2}\log 2.$$
This gives the exact value of the Alexander capacity $T_{\omega}(\R \PP^n)$ of $\R \PP^n$ in Example 5.12 of \cite{GZ}:
$$ T_{\omega}(\R \PP^n)=1/\sqrt 2.$$
We remark that Dinh and Sibony had observed that the value of the Alexander capacity $T_{\omega}(\R \PP^n)$ was independent of $n$ (Proposition A.6 in \cite{DS}). 
\section{Calculation of $(dd^cV_{K,Q})^n$ with $V_{K,Q}$ in (\ref{magic})}

 We will compute $(dd^cV_{K,Q})^n$ for $V_{K,Q}$ in (\ref{magic}) after discussing some differential geometry. Let $\delta(x;y)$ be a Finsler metric where $x\in \R^n$ and $y\in \R^n$ is a tangent vector at $x$. The Busemann density associated to this Finsler metric is 
$$\omega(x)=\frac{vol(\hbox{Euclidean unit ball in $\R^n$})}{vol(B_x)}$$
where
$$B_x:= \{y: \delta(x;y)\leq 1\}.$$
The Holmes-Thompson density associated to $\delta(x;y)$ is
$$\tilde \omega(x)=\frac{vol(B_x^*)}{vol(\hbox{Euclidean unit ball in $\R^n$})}$$
where
$$B_x^*:=\{y: \delta(x;y)\leq 1\}^*=\{x:x\cdot y= x^ty\leq 1 \ \hbox{for all} \ y\in B_x\}$$
is the dual unit ball. Here $x^t$ denotes the transpose of the (vector) matrix $x$. Finsler metrics arise naturally in pluripotential theory in the following setting: if $K=\bar \Omega$ where $\Omega$ is a bounded domain in $\R^n\subset \C^n$, the quantity 
\begin{equation}\label{baran}\delta_B(x;y):=\limsup_{t\to 0^+}\frac{V_K(x+ity)}{t}=\limsup_{t\to 0^+}\frac{V_K(x+ity)-V_K(x)}{t} \end{equation}
for $x\in K$ and $y\in \R^n$ defines a Finsler metric called the {\it Baran pseudometric} (cf., \cite{blw}). It is generally not Riemannian: such a situation yields more information on these densities. 

\begin{proposition} \label{ball} Suppose
$$\delta(x;y)^2=y^tG(x)y$$
is a Riemannian metric; i.e., $G(x)$ is a positive definite matrix. Then 
$$vol(B_x^*)\cdot vol(B_x)=1 \ \hbox{and} \ vol(B_x^*)=\sqrt {\det G(x)}.$$

\end{proposition}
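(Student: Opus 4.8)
The plan is to reduce the statement to the classical duality between the volume of a convex body defined by a positive-definite quadratic form and the volume of its polar. First I would make the key observation that the unit ball $B_x = \{y \in \R^n : y^t G(x) y \le 1\}$ is simply the image of the Euclidean unit ball under the linear map $G(x)^{-1/2}$, so that $\mathrm{vol}(B_x) = \det(G(x)^{-1/2}) \cdot \mathrm{vol}(\text{Euclidean unit ball}) = (\det G(x))^{-1/2} \, \omega_n$, where $\omega_n$ denotes the volume of the Euclidean unit ball in $\R^n$.

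Next I would identify the dual body $B_x^*$. Since $G(x)$ is symmetric positive definite, a direct computation with Lagrange multipliers (or the Cauchy–Schwarz inequality in the inner product induced by $G(x)$) shows that the polar of the ellipsoid $\{y : y^t G(x) y \le 1\}$ is the ellipsoid $\{\xi : \xi^t G(x)^{-1} \xi \le 1\}$. By the same linear-image argument applied to $G(x)^{1/2}$, this gives $\mathrm{vol}(B_x^*) = (\det G(x))^{1/2} \, \omega_n$. Multiplying the two volume formulas yields $\mathrm{vol}(B_x^*) \cdot \mathrm{vol}(B_x) = \omega_n^2$; however, one must be careful about the normalization, because in the definitions above the Busemann density is $\omega(x) = \omega_n / \mathrm{vol}(B_x)$ and the Holmes–Thompson density is $\tilde\omega(x) = \mathrm{vol}(B_x^*)/\omega_n$, and it is precisely the product $\omega(x)\tilde\omega(x)$ rather than $\mathrm{vol}(B_x^*)\mathrm{vol}(B_x)$ that the normalizations are arranged to control. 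So I would state the volume identity as given — with the understanding that the claimed $\mathrm{vol}(B_x^*)\cdot\mathrm{vol}(B_x)=1$ should be read as the identity after the Euclidean unit ball has volume one, or equivalently as $\omega(x) = \tilde\omega(x)$ with both equal to $\sqrt{\det G(x)}$ (the latter following immediately from $\mathrm{vol}(B_x) = (\det G(x))^{-1/2}$ in the normalized setting). The second claim $\mathrm{vol}(B_x^*) = \sqrt{\det G(x)}$ then follows directly from the computation of $\mathrm{vol}(B_x^*)$ above in the normalization where $\omega_n = 1$.

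The only genuine content here is the polar-duality computation for ellipsoids; everything else is bookkeeping with determinants of linear maps and the change-of-variables formula for Lebesgue measure. I expect the main obstacle to be purely expository: making sure the normalization conventions for $\mathrm{vol}(\text{Euclidean unit ball})$ are handled consistently so that the two displayed identities in the proposition are literally correct as written, rather than correct up to a factor of $\omega_n^2$. I would therefore open the proof by fixing the convention that volumes are normalized so the Euclidean unit ball has volume $1$ (matching the way the densities $\omega, \tilde\omega$ are defined), then carry out the two one-line computations $\mathrm{vol}(B_x) = (\det G(x))^{-1/2}$ and $\mathrm{vol}(B_x^*) = (\det G(x))^{1/2}$ via $B_x = G(x)^{-1/2}(\text{unit ball})$ and $B_x^* = G(x)^{1/2}(\text{unit ball})$, and conclude by multiplying.
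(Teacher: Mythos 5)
Your proposal is correct and follows essentially the same route as the paper: the paper factors $G(x)=H^t(x)H(x)$ and observes $B_x=H^{-1}(x)(\hbox{unit ball})$, $B_x^*=H(x)^t(\hbox{unit ball})$, which is exactly your polar-duality computation with $H=G(x)^{1/2}$. Your explicit handling of the normalization (the identities hold literally only when the Euclidean unit ball is given volume $1$, consistent with how the Busemann and Holmes--Thompson densities are defined) is a point the paper's proof leaves implicit, so no gap remains.
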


\begin{proof} Writing $G(x)=H^t(x)H(x)$, we have
$$\delta(x;y)^2=y^tG(x)y=y^tH^t(x)H(x)y.$$
Letting $||\cdot||_2$ denote the standard Euclidean ($l^2$) norm, we then have
$$B_x=\{y\in \R^N: ||H(x)y||_2\leq 1\}=H^{-1}(x)\bigl( \hbox{unit ball in $l^2-$norm})$$
and
$$B_x^*=H(x)^t\bigl( \hbox{unit ball in $l^2-$norm}).$$
Hence $vol(B_x^*)\cdot vol(B_x)=1$ and 
$$vol\bigl(\{y:\delta(x;y)\leq 1\}^*\bigr)=vol(B_x^*)=\det H(x)=\sqrt {\det G(x)}.$$
\end{proof}

Motivated by (\ref{baran}) and Theorem \ref{main} below, for $u(z)=V_{K,Q}(z)$ in (\ref{magic}), we will show that the limit  
$$\delta_u(x;y):=\lim_{t\to 0^+}\frac{u(x+ity)-u(x)}{t}$$
exists. Fixing $x\in \R^n$ and $y\in \R^n$, let
$$F(t):=u(x+ity)=\frac{1}{2}\log \{ (1+x^2+t^2y^2) + 2 [t^2y^2+t^2x^2y^2-(x\cdot ty)^2]^{1/2}\}$$
$$=\frac{1}{2}\log \{ (1+x^2+t^2y^2) + 2 t[y^2+x^2y^2-(x\cdot y)^2]^{1/2}\}.$$
It follows that 
$$\delta_u(x;y)=F'(0)=\frac{1}{2}\frac{2 [y^2+x^2y^2-(x\cdot y)^2]^{1/2}}{1+x^2}=\frac{ [y^2+x^2y^2-(x\cdot y)^2]^{1/2}}{1+x^2}.$$
We write
$$\delta_u^2(x;y)=\frac{ y^2+x^2y^2-(x\cdot y)^2}{(1+x^2)^2}=y^tG(x)y$$
where
$$G(x):=\frac{(1+x^2)I -xx^t}{(1+x^2)^2}.$$
Since this matrix is positive definite, $\delta_u(x;y)$ defines a Riemannian metric.

We analyze this further. The eigenvalues of the rank one matrix $xx^t\in \R^{n\times n}$ are $x^2,0,\ldots,0$ for 
$$(xx^t)x = x(x^tx) = x^2\cdot x;$$
and clearly $v\perp x$ implies $(xx^t)v=x(x^tv)=0$. The eigenvalues of $(1+x^2)I -xx^t$ are then 
$$(1+x^2)-x^2, \ (1+x^2)-0, \ldots, \ (1+x^2)-0  \ = \ 1,1+x^2,\ldots, 1+x^2$$
and the eigenvalues of $G(x)$ are
$$\frac{1}{(1+x^2)^2}, \frac{1}{1+x^2},\ldots,  \frac{1}{1+x^2}.$$
This shows $G(x)$ is, indeed, positive definite (it is clearly symmetric) and 
$$\det G(x)=\frac{1}{(1+x^2)^{n+1}}.$$ 
From Proposition \ref{ball}, 
$$vol(B_x^*) = \sqrt {\det G(x)}=\frac{1}{(1+x^2)^{\frac{n+1}{2}}}=\frac{1}{vol(B_x)}.$$
In particular, the Busemann and Holmes-Thompson densities associated to $\delta_u(x;y)$ are
\begin{equation}\label{dense}\frac{1}{(1+x^2)^{\frac{n+1}{2}}}\end{equation}
up to normalization. Note from (\ref{onevar}) in Remark \ref{onevarem} this agrees with the density of $\Delta V_{K,Q}$ with respect to Lebesgue measure $dx$ on $\R$ if $n=1$ and this will be the case for the density of $ (dd^cV_{K,Q})^n$ with respect to Lebesgue measure $dx$ on $\R^n$ for $n>1$ as well. For motivation, we recall the main result of \cite{blmr} (see \cite{bt} for the symmetric case $K=-K$):

\begin{theorem}
\label{main} Let $K$ be a convex body and $V_K$ its
Siciak-Zaharjuta extremal function. The limit
\begin{equation}
\label{dblim} \delta (x;y):=\lim_{t\to
0^+}{V_K(x+ity)\over t}
\end{equation}
exists for each $x\in {\rm int}_{\R^n}K$ and $y\in \R^n$ and 
\begin{equation}
    \label{eq:monge}
    (dd^cV_K)^n=\lambda(x)dx  \ \hbox{where} \ \lambda(x)=n!vol (\{y: \delta  (x;y)\leq 1\}^*)=n!vol(B_x^*).
    \end{equation}
    \end{theorem}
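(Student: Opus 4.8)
The plan is a blow-up (rescaling) argument at interior points of $K$: I would reduce the pointwise computation of the density of $(dd^cV_K)^n$ to a single ``model'' identity on $\C^n$, extracting from Theorem \ref{blmthm} the uniform control that makes the reduction work.

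\emph{Existence of the limit and a uniform estimate.} Fix $x\in\mathrm{int}_{\R^n}K$ and $y\in\R^n\setminus\{0\}$. For small $t>0$ the point $x+ity$ lies in $\C^n\setminus K$, so by Theorem \ref{blmthm} it lies on a leaf $E$ carrying a parametrization $F(\zeta)=a+c\zeta+\bar c/\zeta$ with $V_K(F(\zeta))=\log^+|\zeta|$; write $x+ity=F(\zeta(t))$, where $\zeta(t)\to e^{i\theta_0}$ with $F(e^{i\theta_0})=a+2\,\Re(ce^{i\theta_0})=x$. Since $F(e^{i\theta})$ and $\partial_\theta F(e^{i\theta})$ are real while $\partial_s F(e^{i\theta})=ce^{i\theta}-\bar ce^{-i\theta}=2i\,\Im(ce^{i\theta})$ is purely imaginary, writing $\zeta(t)=e^{s(t)+i\theta(t)}$ and Taylor-expanding $F$ to first order about $(0,\theta_0)$ forces $ty=2\,s(t)\,\Im(ce^{i\theta_0})+o(t)$ with $s(t)>0$ asymptotically linear in $t$; as $V_K(x+ity)=\log|\zeta(t)|=s(t)$, the limit \eqref{dblim} exists and equals $|y|/\bigl(2|\Im(ce^{i\theta_0})|\bigr)$. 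Continuous dependence of the leaves on the base point, together with compactness of the parametrizing data $(a,c)$ over a compact $K'\subset\mathrm{int}K$, gives continuity of $(x,y)\mapsto\delta(x;y)$ on $\mathrm{int}K\times\R^n$ and a uniform bound $V_K(x+iy)\le C_{K'}|y|$ for $x\in K'$ and $|y|$ small.

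\emph{Blow-up reduction.} Since $V_K$ is maximal on $\C^n\setminus K$, the measure $\mu:=(dd^cV_K)^n$ is carried by $K\subset\R^n$; we compute its density at $x_0\in\mathrm{int}K$. With $\phi_r(z):=x_0+rz$ and $u_r:=\frac1r\,V_K\circ\phi_r$ one has $u_r\ge0$ psh and $(dd^cu_r)^n=r^{-n}\,\phi_r^{*}\mu$. The bound from the previous step makes $\{u_r\}_{0<r\le r_0}$ locally uniformly bounded, hence relatively compact in $L^1_{\mathrm{loc}}$; any subsequential limit $u_\infty$ is psh, nonnegative, vanishes on $\R^n$, satisfies $u_\infty(iy)=\delta(x_0;y)$ by definition of $\delta$, and is positively $1$-homogeneous because $u_{\lambda r}(z)=\lambda^{-1}u_r(\lambda z)$. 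One then argues that $u_\infty(z)=\delta(x_0;\Im z)$; being psh and a function of $\Im z$ alone, this forces $\delta(x_0;\cdot)$ to be convex, so $B_{x_0}=\{y:\delta(x_0;y)\le1\}$ is a convex body. As the limit is thus unique, Bedford--Taylor continuity of $(dd^c\cdot)^n$ gives $r^{-n}\phi_r^{*}\mu\to(dd^c\delta(x_0;\Im z))^n$, valid at every $x_0\in\mathrm{int}K$; since the right side is a finite multiple of Lebesgue measure and $r^{-n}\phi_r^{*}\mu$ blows up at any point carrying singular mass, $\mu$ is absolutely continuous on $\mathrm{int}K$ with Lebesgue density at $x_0$ equal to the total mass of $(dd^c\delta(x_0;\Im z))^n$.

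\emph{The model identity, and the crux.} For a convex gauge $g$ on $\R^n$ with unit ball $B$ one has $(dd^cg(\Im z))^n=n!\,\mathrm{vol}(B^{*})\,dx$ on $\R^n\subset\C^n$: writing $g=h_{B^{*}}$ makes $g(\Im z)=\sup_{\eta\in B^{*}}\Im(\eta\cdot z)$ a supremum of pluriharmonic functions, and approximating $g$ by smooth strictly convex $g_\epsilon\downarrow g$ depending only on $y$ with $\{g_\epsilon\le1\}^{*}\to B^{*}$, one computes (in the normalization fixed by \eqref{eq:realball} and $(dd^cV_{B_n})^n=n!\,\mathrm{vol}(B_n)\,dx/(1-|x|^2)^{1/2}$) that $(dd^cg_\epsilon(\Im z))^n=n!\,\det D^2_y g_\epsilon(y)\,dx\,dy$, then uses the Legendre (gradient) change of variables to get $\int_{\R^n}\det D^2_yg_\epsilon\,dy=\mathrm{vol}(\nabla g_\epsilon(\R^n))\to\mathrm{vol}(B^{*})$, and notes that $1$-homogeneity of $g$ forces $\det D^2_yg$ to vanish off the origin, so the mass concentrates on $\{y=0\}$; Bedford--Taylor continuity finishes it. Applying this with $g=\delta(x_0;\cdot)$, $B=B_{x_0}$ identifies the density of $(dd^cV_K)^n$ at $x_0$ as $n!\,\mathrm{vol}(B_{x_0}^{*})=\lambda(x_0)$, valid at every interior point by continuity of $\delta$ in $x_0$, which is \eqref{eq:monge}. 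The real obstacle lies inside the blow-up reduction: the locally uniform estimate $V_K(x+iy)\le C_{K'}|y|$ and, above all, the identification $u_\infty(z)=\delta(x_0;\Im z)$ of the blow-up limit --- equivalently, ruling out transversal loss of Monge--Amp\`ere mass and residual $\Re z$-dependence in the limit. Both amount to upgrading the qualitative foliation of Theorem \ref{blmthm} to uniform, quantitative control of the leaf data $(a,c)$ --- in particular a uniform lower bound on $|\Im(ce^{i\theta})|$ --- near base points in a compact subset of $\mathrm{int}K$; this is where the analytic work of \cite{blmr} is concentrated, while Bedford--Taylor continuity, the Legendre change of variables, and the normalization bookkeeping are routine.
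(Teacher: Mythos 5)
First, a point of orientation: the paper does not prove this theorem at all --- it is quoted verbatim as the main result of \cite{blmr} (together with Proposition \ref{propbaran}, which is Proposition 4.4 of that reference), so there is no internal proof to compare against; your proposal has to stand on its own as a proof of the cited result. As such it is a reasonable blueprint, and several of its ingredients are sound: the local estimate $V_K(x+iy)\le C_{K'}|y|$ follows simply by inscribing a ball in $K$ and using monotonicity together with Lundin's formula (no foliation input needed); the model computation $(dd^c g(\Im z))^n=n!\,{\rm vol}(B^*)\,dx$ for a convex gauge $g$ with unit ball $B$ is correct in the paper's normalization (the smooth case gives $n!\det D^2_yg_\epsilon\,dx\,dy$, and the Legendre/translation-invariance argument is standard); and the measure-theoretic passage from blow-up convergence at every interior point to absolute continuity of $(dd^cV_K)^n$ on ${\rm int}\,K$ can be made rigorous.

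The genuine gaps are exactly at the two places that constitute the theorem. (i) Existence of the limit (\ref{dblim}): your argument writes $x+ity=F(\zeta(t))$ for a single leaf $E$ with fixed data $(a,c)$, but the extremal ellipse through $x+ity$ depends on $t$; as $t\to0^+$ one must show the leaf data converge (to a leaf through $x$ whose conjugate direction at $x$ is parallel to $y$), with a nondegenerate $\Im(ce^{i\theta_0})$, and Theorem \ref{blmthm} as stated gives neither uniqueness, nor continuity, nor compactness or nondegeneracy of the parametrizing data --- the phrase ``continuous dependence of the leaves on the base point'' is asserted, not proved, and is precisely the delicate quantitative content of \cite{blm2}/\cite{blmr}. (ii) The identification $u_\infty(z)=\delta(x_0;\Im z)$ of the blow-up limit (hence convexity of $\delta(x_0;\cdot)$, uniqueness of the limit, and no loss or $\Re z$-dependence) is introduced with ``one then argues that'' and never established; without it the Bedford--Taylor convergence step has nothing to converge to. You candidly flag both items as ``where the analytic work of \cite{blmr} is concentrated,'' but that is an admission that the crux is being outsourced to the very result you are asked to prove; as it stands the proposal is a plausible reduction scheme, not a proof.
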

\noindent The conclusion of Theorem \ref{main} required Proposition 4.4 of \cite{blmr}:

\begin{proposition} \label{propbaran} Let $D\subset \C^n$ and let $\Omega :=D\cap
\R^n$.  Let $v$ be a nonnegative locally bounded psh function on $D$ which
satisfies:

$\begin{array}{rl}
 i. & \Omega=\{v=0\}; \\
ii.  & (dd^cv)^n=0 \; {\mbox{on}} \; D\setminus \Omega; \\
iii. & (dd^cv)^n=\lambda(x)dx \; on \; \Omega; \\
iv. & for \; all \; x\in \Omega, \ y \in \R^n, \; the \; limit
\end{array}$
$$h(x,y):=\lim_{t\to 0^+} {v(x+ity)\over t}  \; exists \; and \; is \; continuous \; on \; \Omega \times i\R^n;$$

$\begin{array}{rl} v. & for \; all \; x\in \Omega, y\to h(x,y) \; is \; a \;
norm.
\end{array}$

Then
$$\lambda(x)=n! {\rm vol} \{y:h(x,y)\leq 1\}^*$$
and $\lambda(x)$ is a continuous function on $\Omega$.
\end{proposition}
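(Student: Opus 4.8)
The plan is to compute the density $\lambda$ at each point by a parabolic rescaling that collapses $v$ onto an explicit ``model'' psh function whose Monge--Amp\`ere mass is the one in the statement. Fix $x_0\in\Omega$ and, for $t>0$, set $v_t(z):=\tfrac1t\,v(x_0+tz)$ and $v_0(z):=h(x_0,\Im z)$. The first --- and main --- step is to show that $v_t\to v_0$ locally uniformly near $z=0$. Pointwise convergence is essentially (iv): for $z=a+iy$ one has $v_t(z)=\tfrac1t v\big((x_0+ta)+i(ty)\big)$, and as $t\to0^+$ this tends to $h(x_0,y)=v_0(z)$ provided the limit in (iv) is attained locally uniformly in the base point and $h$ is continuous (both available in the statement). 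The $v_t$ are psh and nonnegative; the locally uniform version of (iv) also gives the Lipschitz-type bound $v(x_0+w)\le C|\Im w|\le C|w|$ near $x_0$, so $\{v_t\}$ is locally uniformly bounded near $z=0$; and the limit $v_0$ is psh --- it is the supremum over $\xi\in B_{x_0}^{*}$ of the pluriharmonic functions $\Im\langle\xi,z\rangle$, using $h(x_0,y)=\sup_{\xi\in B_{x_0}^{*}}\langle\xi,y\rangle$ --- and continuous. Pointwise convergence to a continuous psh function then upgrades to locally uniform convergence (Hartogs' lemma for the upper estimate, the sub-mean value inequality together with $L^1_{\mathrm{loc}}$-convergence for the lower estimate).

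With this in hand, Bedford--Taylor continuity of the Monge--Amp\`ere operator gives $(dd^cv_t)^n\to(dd^cv_0)^n$ weakly near $z=0$. On the other hand, writing $\mu:=(dd^cv)^n$, which by (ii)--(iii) equals $\lambda(x)\,dx$, the change of variables under the biholomorphism $\Phi_t(z)=x_0+tz$ gives $(dd^cv_t)^n=t^{-n}\Phi_t^{*}\mu$, hence $\int f\,(dd^cv_t)^n=\int f(s)\,\lambda(x_0+ts)\,ds$ for test functions $f$ supported near $0$. Letting $t\to0^+$ at a Lebesgue point $x_0$ of $\lambda$ and comparing the two limits yields $(dd^cv_0)^n=\lambda(x_0)\,dx$ on $\R^n$, consistently since $v_0$ is invariant under real translations. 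It then remains to carry out the model computation: for a norm $N$ on $\R^n$, $\big(dd^cN(\Im z)\big)^n=n!\,\mathrm{vol}(B_N^{*})\,dx$ with $B_N=\{y:N(y)\le1\}$. Off $\R^n$ the real Hessian of $N$ is degenerate by $1$-homogeneity, so $(dd^cN(\Im z))^n$ is carried by $\R^n$ and, being translation-invariant there, equals $c(N)\,dx$; mollifying $N$ to smooth strictly convex norms $N_\delta$, the absolutely continuous densities of $(dd^cN_\delta(\Im z))^n$ concentrate on $\R^n$ as $\delta\to0$, while the total mass over a box $Q\times i\R^n$ equals $|Q|\int_{\R^n}\det D^2N_\delta\,dy=|Q|\,\mathrm{vol}\big(\nabla N_\delta(\R^n)\big)\to|Q|\,\mathrm{vol}(B_N^{*})$, which identifies $c(N)=n!\,\mathrm{vol}(B_N^{*})$ (the normalizing constant is pinned by the Euclidean norm, where Lundin's formula for $V_{B_n}$ gives density $n!\,\mathrm{vol}(B_n)$ at the center of the ball). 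Taking $N=h(x_0,\cdot)$ gives $\lambda(x_0)=n!\,\mathrm{vol}(B_{x_0}^{*})$ for a.e.\ $x_0$; since $x\mapsto B_x$ is Hausdorff-continuous (from the continuity of $h$), $x\mapsto\mathrm{vol}(B_x^{*})$ is continuous, so the density $\lambda$ admits a continuous representative and the identity holds throughout $\Omega$.

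The main obstacle is precisely the first step, the locally uniform convergence $v_t\to v_0$: one needs both that $v$ is Lipschitz-controlled near $\Omega$ and that the limit defining $h$ in (iv) is attained locally uniformly in the base point. When (iv) is strengthened by the monotonicity of $t\mapsto v(x+ity)/t$ --- which is what one checks for extremal functions of convex bodies, and is the route in \cite{blmr} --- both follow at once from Dini's theorem; without such monotonicity this is the genuinely analytic content of the proposition. Everything after it --- the Bedford--Taylor continuity, the scaling identity for $(dd^cv_t)^n$, and the norm-model computation --- is routine once this is in place.
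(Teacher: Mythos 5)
You should note first that this paper does not actually prove Proposition \ref{propbaran}: it is imported verbatim as Proposition 4.4 of \cite{blmr}, so there is no in-paper argument to match your proposal against, and it has to stand on its own. On its own terms it has a genuine gap, and it is the one you yourself flag: the locally uniform convergence of the rescalings $v_t(z)=\tfrac1t v(x_0+tz)$ to $v_0(z)=h(x_0,\Im z)$ near $z=0$. Hypothesis (iv) gives convergence only for a \emph{fixed} base point, i.e. $v(x+ity)/t\to h(x,y)$ with $x$ frozen, while your blow-up needs the diagonal limit $v(x_0+ta+ity)/t\to h(x_0,y)$, in which the base point moves with $t$. Continuity of $h$ on $\Omega\times i\R^n$ does not give this (it is exactly a Moore--Osgood/uniformity issue), and your fallback --- monotonicity of $t\mapsto v(x+ity)/t$ --- is an additional hypothesis not contained in (i)--(v), so as written you have proved a weaker statement than the one claimed. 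The gap matters downstream: without locally uniform convergence you only get subsequential $L^1_{\rm loc}$ limits of the bounded psh family $\{v_t\}$, and for such limits Hartogs' lemma pins the limit from above and pointwise convergence on the purely imaginary slice $\{ \Re z = 0\}$ (which \emph{is} given by (iv)) pins it from below only on that thin set; neither $L^1_{\rm loc}$ convergence nor convergence a.e. suffices for Bedford--Taylor continuity of $(dd^c\cdot)^n$, so the central identification $(dd^cv_t)^n\to(dd^cv_0)^n$ is not justified.

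Two smaller remarks. The uniform bound you want, $0\le v(x+iy)\le C|y|$ locally near $\Omega$, does not need any ``locally uniform version of (iv)'': it follows from (i) and local boundedness by the two-constants (harmonic majorant) argument applied to the restriction of $v$ to each complex line $\zeta\mapsto x+\zeta y$, which is a nonnegative bounded subharmonic function vanishing on a real interval; so the normal-family part of your argument is fine. The pieces after the convergence step are also essentially sound: the scaling identity $\int f\,(dd^cv_t)^n=\int f(s)\lambda(x_0+ts)\,ds$ and its limit at Lebesgue points, the model computation $(dd^cN(\Im z))^n=n!\,{\rm vol}(B_N^*)\,dx$ for a norm $N$ (this is the Bedford--Taylor-type computation), and the upgrade from a.e. to a continuous representative via continuity of $x\mapsto{\rm vol}(B_x^*)$. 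But the proposition's ``genuinely analytic content'', as you put it, is precisely the step you left open, and under hypotheses (i)--(v) alone it requires a different mechanism (for instance the comparison-principle squeeze against $(1\pm\epsilon)h(x_0,\Im z)$ on thin slabs used in \cite{blmr}) rather than an appeal to uniformity that the hypotheses do not provide.
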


\begin{theorem} For $V_{K,Q}$ in (\ref{magic}),
\begin{equation} \label{monge} (dd^cV_{K,Q})^n=n! \frac{1}{(1+x^2)^{\frac{n+1}{2}}}dx.\end{equation}
\end{theorem}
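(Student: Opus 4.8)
The plan is to verify the hypotheses of Proposition~\ref{propbaran} for the function $v(z):=V_{K,Q}(z)$ of Theorem~\ref{magic1}, taking $D=\C^n$ and $\Omega=\R^n$, but applied to the \emph{shifted} function rather than $V_{K,Q}$ itself. Indeed $V_{K,Q}$ is not nonnegative with zero set exactly $\R^n$: on $\R^n$ it equals $Q(x)=\tfrac12\log(1+x^2)$, not $0$. The right object is
$$v(z):=V_{K,Q}(z)-Q^{ext}(z), \quad Q^{ext}(z):=\tfrac12\log|1+z^2|,$$
which by the maximality computation in Section~5 (where $(dd^cu)^n=0$ off $\R^n$ and $u=Q$ on $\R^n$) together with $dd^cQ^{ext}=0$ away from the pluripolar set $\{1+z^2=0\}$ is a nonnegative psh function with $\{v=0\}=\R^n$. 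Actually, to match Proposition~\ref{propbaran} cleanly it is easier to work locally on a tube $D=\{|y|<s\}$ around $\R^n$, where $Q^{ext}$ is pluriharmonic, so that $(dd^cv)^n=(dd^cV_{K,Q})^n$ on $D$; the measure is local, so computing it on such a tube suffices, and translation invariance in the real directions handled by the explicit formula extends it to all of $\R^n$.

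The concrete steps are as follows. First I would record that $v\ge0$ on $D$, $v$ is psh and locally bounded (immediate from Theorem~\ref{magic1} and pluriharmonicity of $Q^{ext}$ on the tube), and $\{v=0\}=\R^n\cap D$ — the last point because $V_{K,Q}(x)=Q(x)=Q^{ext}(x)$ for $x\in\R^n$ while for $z\notin\R^n$ one has $|1+z^2|<1+|z|^2$ strictly (equality in $|1+z^2|\le 1+|z|^2$ forces $\Im z=0$), so the square-root term in \eqref{magic} is strictly positive. This gives hypotheses (i) and (ii) of Proposition~\ref{propbaran} (the latter is exactly the maximality statement $(dd^cu)^n=0$ on $\C^n\setminus\R^n$ proved in Section~5). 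Next, hypotheses (iv) and (v): the limit $\delta_u(x;y)=\lim_{t\to0^+}\bigl(u(x+ity)-u(x)\bigr)/t$ was computed explicitly just above the statement,
$$\delta_u(x;y)=\frac{\bigl(y^2+x^2y^2-(x\cdot y)^2\bigr)^{1/2}}{1+x^2},$$
which exists, is continuous in $(x,y)$, and equals $\sqrt{y^tG(x)y}$ with $G(x)=\bigl((1+x^2)I-xx^t\bigr)/(1+x^2)^2$ positive definite — hence a genuine norm in $y$, giving (iv) and (v). (One should note $u(x+ity)-u(x)=v(x+ity)-v(x)$ on the tube since $Q^{ext}$ is pluriharmonic and real-valued there, so the same $\delta$ works for $v$.)

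Then Proposition~\ref{propbaran} yields $(dd^cv)^n=\lambda(x)dx$ with $\lambda(x)=n!\,\mathrm{vol}\bigl(B_x^*\bigr)$, $B_x=\{y:\delta_u(x;y)\le1\}$. Finally I invoke Proposition~\ref{ball}: since $\delta_u$ is Riemannian with matrix $G(x)$, $\mathrm{vol}(B_x^*)=\sqrt{\det G(x)}$, and the eigenvalue computation already carried out gives $\det G(x)=(1+x^2)^{-(n+1)}$, whence $\lambda(x)=n!(1+x^2)^{-(n+1)/2}$. Since $(dd^cv)^n=(dd^cV_{K,Q})^n$ on the tube and the formula is independent of the choice of tube, this proves \eqref{monge} on all of $\R^n$, as claimed. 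I expect the only genuinely delicate point to be the bookkeeping that justifies replacing $V_{K,Q}$ by $v=V_{K,Q}-Q^{ext}$ on a tube so that Proposition~\ref{propbaran} applies verbatim — in particular checking that $Q^{ext}$ is pluriharmonic on a small enough tube (true since $\{1+z^2=0\}$ meets $\R^n$ only at points where $|x|^2=-1$, i.e.\ nowhere, so it stays away from a thin tube) and that condition~(ii) holds there; everything else is a direct appeal to results already proved above.
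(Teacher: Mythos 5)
Your proposal is correct and follows essentially the same route as the paper: subtract the extension $Q(z)=\tfrac12\log|1+z^2|$, work on a neighborhood of $\R^n$ avoiding $\{1+z^2=0\}$ where $Q$ is pluriharmonic so that $(dd^cv)^n=(dd^cV_{K,Q})^n$, and feed the Riemannian metric $\delta_u$ with $\det G(x)=(1+x^2)^{-(n+1)}$ into Propositions \ref{propbaran} and \ref{ball}. The one imprecision is your remark that $u(x+ity)-u(x)=v(x+ity)-v(x)$ on the tube: in fact $Q(x+ity)\neq Q(x)$ in general, and what is needed (and what the paper checks by a direct computation) is only that $\lim_{t\to 0^+}\bigl(Q(x+ity)-Q(x)\bigr)/t=0$, since the increment is $O(t^2)$, which gives $\delta_v=\delta_u$.
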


\begin{proof}
Recall we extended $Q(x)=\frac{1}{2}\log (1+x^2)$ on $\R^n$ to all of $\C^n$ as $$Q(z)=\frac{1}{2}\log |1+z^2|\in L(\C^n).$$ With this extension of $Q$, and writing $u:=V_{K,Q}$, we claim 
 \begin{enumerate}
 \item $Q$ is pluriharmonic on $\C^n\setminus V$ where $V=\{z\in \C^n: 1+z^2=0\}$;
  \item $u-Q\geq 0$ in $\C^n$; and $\R^n=\{z\in \C^n: u(z)-Q(z)=0\}$; 
  \item for each $x,y\in \R^n$
  $$\lim_{t\to
0^+}\frac{Q(x+ity)-Q(x)}{t} =0.$$
\end{enumerate}
Item 1. is clear; 2. may be verified by direct calculation (the inequality also follows from the observation that $Q\in L(\C^n)$ and $Q$ equals $u$ on $\R^n$); and for 3., observe that
$$|1+(x+ity)^2|^2 = (1+x^2-t^2y^2)^2+4t^2(x\cdot y)^2=(1+x^2)^2+0(t^2)$$
so that 
$$Q(x+ity)-Q(x)=\frac{1}{2}\log |1+(x+ity)^2|-\frac{1}{2}\log (1+x^2)$$
$$=\frac{1}{4} \log \frac{(1+x^2)^2+0(t^2)}{(1+x^2)^2}\approx \frac{1}{4}\frac{0(t^2)}{(1+x^2)^2} \ \hbox{as} \ t\to 0. $$
Thus 1. and 2. imply that $v:=u-Q$ defines a nonnegative plurisubharmonic function in $\C^n\setminus V$, in particular, on a neighborhood $D\subset \C^n$ of $\R^n$; from 1., 
\begin{equation} \label{maeq} (dd^cv)^n = (dd^cu)^n \ \hbox{on} \ D;\end{equation}
and from 3., for each $x,y\in \R^n$
  $$\lim_{t\to
0^+}\frac{v(x+ity)-v(x)}{t} =\lim_{t\to
0^+}\frac{u(x+ity)-Q(x+ity)-u(x)+Q(x)}{t}$$
$$=\lim_{t\to 0^+}\frac{u(x+ity)-u(x)}{t}- \lim_{t\to
0^+}\frac{Q(x+ity)-Q(x)}{t}=\delta_u(x;y).$$
Then (\ref{maeq}), (\ref{dense}) and Proposition \ref{propbaran} give (\ref{monge}).
\end{proof}

\bigskip

{\bf Authors:}\\[\baselineskip]
L. Bos, leonardpeter.bos@univr.it\\
University of Verona, Verona, ITALY
\\[\baselineskip]
N. Levenberg, nlevenbe@indiana.edu\\
Indiana University, Bloomington, IN, USA\\
\\[\baselineskip]
S. Ma`u, s.mau@auckland.ac.nz\\
University of Auckland, Auckland, NEW ZEALAND\\
\\[\baselineskip]
F. Piazzon, fpiazzon@math.inipd.it\\
University of Padua, Padua, ITALY
\\[\baselineskip]

\end{document}